\newcommand{\Rb}{\mathbb{R}}
\newcommand{\Beq}{\begin{equation}}
    \newcommand{\Eeq}{\end{equation}}
\newtheorem{theorem}{Theorem}[section]
\newtheorem{lemma}[theorem]{Lemma}
\newtheorem{proposition}[theorem]{Proposition}
\newtheorem{corollary}[theorem]{Corollary}
\newtheorem{definition}[theorem]{Definition}
\numberwithin{equation}{section}
\newcommand{\wh}{\widehat}
\newcommand{\rb}{\right)}
\newcommand{\lb}{\left(}
\newcommand{\I}{\textsl{i}}
\newcommand{\md}{\mathrm{d}}
\newcommand{\vp}{\varphi}
\newcommand{\PD}{\partial}
\newcommand{\wt}{\widetilde}
\newcommand{\Jc}{\mathcal{J}}
\newcommand{\Sc}{\mathcal{S}}
\newcommand{\Sb}{\mathbb{S}}
\newcommand{\beq}{\begin{equation*}}
\newcommand{\eeq}{\end{equation*}}
\newcommand{\bal}{\begin{align}}
\newcommand{\eal}{\end{align}}
\renewcommand{\L}{\langle}
\newcommand{\R}{\rangle}
\newcommand{\bpr}{\begin{proof}}
    \newcommand{\epr}{\end{proof}}
\renewcommand{\o}{\omega}
\newcommand{\bel}[1]{\begin{equation}\label{#1}}
\newcommand{\ee}{\end{equation}}
\title[Range characterization]{Ray transform on Sobolev spaces of symmetric tensor fields, II: Range characterization }
\author[V. P. Krishnan]{Venkateswaran P. Krishnan}
\address{
Centre for Applicable Mathematics, Tata Institute of Fundamental Research, India }
\email{vkrishnan@tifrbng.res.in}
\author[V. A. Sharafutdinov]{Vladimir A. Sharafutdinov}
\address{Sobolev Institute of Mathematics, 4 Koptyug Av., 630090, Novosibirsk, Russia}
\email{sharaf@math.nsc.ru}
\begin{document}

\maketitle
\begin{abstract}
  The ray transform $I$ integrates symmetric $m$-tensor field in $\Rb^n$ over lines. This transform in Sobolev spaces was studied in our earlier work where higher order Reshetnyak formulas (isometry relations) were established. The main focus of the current work is the range characterization. In dimensions $n\geq 3$, the range characterization of the ray transform in Schwartz spaces is well-known; the main ingredient of the characterization is a system of linear differential equations of order $2(m+1)$ which are called John equations. Using the higher order Reshetnyak formulas, the range of the ray transform on Sobolev spaces is characterized in dimensions $n\geq 3$ in this paper.
\end{abstract}

\section{Introduction}

The ray transform integrates functions or more generally symmetric tensor fields over
lines  and the Radon transform integrates functions over hyperplanes in $\Rb^n$.   The ray
transform of functions is the main mathematical tool of computer tomography.
The ray transform of vector fields and that of symmetric second rank tensor fields arise in Doppler tomography
and travel time tomography, respectively. The Radon transform  originally arose while trying to decompose a solution of the wave equation in terms of plane waves. In 2-dimensions, the ray transform of functions coincides, up to notation, with the Radon transform.  The Radon transform is extensively studied in \cite{Helgason:Book,Natterer:Book} and a systematic treatment of the ray transform of symmetric tensor fields in Euclidean and Riemannian manifold settings is performed in \cite{Sharafutdinov:Book}. The ray and Radon transforms will be denoted by $I$ and $R$, respectively, in this paper.

Three questions naturally arise in the study of the Radon transform and ray transform of symmetric tensor fields
(or any other transform with tomographic applications) (i) inversion formulas (ii) stability
and (iii) range characterization. We do not discuss inversion formulas in this paper. Instead, we refer the interested reader to the aforementioned books \cite{Helgason:Book,Natterer:Book, Sharafutdinov:Book} where explicit inversion formulas are derived for such transforms. The second question is that of stability. As we will see, this is  related to the third question of range characterization; the main topic of this paper. For this reason, we begin with a discussion of stability for the Radon transform.
To make the discussion accessible to a broader audience, we have chosen to be informal in the Introduction. Our current work is a companion paper to our prior work on this topic \cite{HORF_Work}, and we largely follow the notation from that paper. To avoid repetition, we have chosen to give only a condensed version of the relevant material to make the discussion self-contained. We recommend the interested readers to read this in conjunction with our earlier work as and when required.

The Radon transform $R$ integrates a function along hyperplanes. The set of hyperplanes can be parameterized by points of $\Sb^{n-1}\times \Rb$. Then $R$ is defined by
    \[
    Rf(\xi,p) = \int\limits_{\L\xi,x\R =p} f(x) \, dx\quad\big((\xi,p)\in\Sb^{n-1}\times \Rb\big),
    \]
where $\langle \cdot\: ,\cdot\rangle$ is the standard dot-product in $\Rb^n$ and $dx$ is the $(n-1)$-dimensional Lebesgue measure on the hyperplane $\{ x\mid \L\xi,x\R =p\}$.  Some condition on $f$ should be imposed for the integral above to converge. The Reshetnyak formulas for the Radon transform is a family of isometry relations (the best stability estimates possible) involving certain weighted Sobolev spaces on $\Rb^n$ and on $\Sb^{n-1}\times \Rb$. For the precise definitions of the weighted Sobolev spaces appearing below, we refer the reader to \cite{Helgason:Book, Sh3, Sh5}. Section 2.1 in  \cite{HORF_Work} also provides a good summary of these works.  We have the following  \emph{family of higher order Reshetnyak formulas} for the Radon transform \cite{Sh5}:
\[
\lVert f\rVert_{H_{t}^{(r,s)}(\Rb^n)}= \lVert Rf\rVert_{H^{r, s+(n-1)/2}_{t+(n-1)/2}\lb \Sb^{n-1}\times \Rb\rb},
\]
for arbitrary reals $s,r$ and for $t>-n/2$. We discuss a few special cases of this formula. The case when $r=s=t=0$ is the original isometry relation between $f$ and its Radon transform, due to Reshetnyak in the 1960s.  The case when $r=0$, $s$ arbitrary and $t>-n/2$ was given by the second author in \cite{Sh3}.

The Reshetnyak formulas are closely related to the range characterization of the Radon transform. The classical Gel'fand-Helgason-Ludwig (GHL) range characterization theorem for the Radon transform on the Schwartz space is the following (below $\Sc(\Sb^{n-1}\times \Rb)$ denotes the Schwartz space of functions on $\Sb^{n-1}\times \Rb$):

A function $g\in \Sc(\Sb^{n-1}\times \Rb)$ is the Radon transform of a function $f\in \Sc(\Rb^n)$ if and only if the following two conditions are satisfied:

(1) $g(\o,p)=g(-\o,-p)$,

(2) for any integer $k\geq 0$, $\int_{\Rb} p^{k} g(\o,p)\,dp$ is a homogeneous polynomial of degree at most $k$ in $\o$.

A natural question is the range characterization of the Radon transform in weighted Sobolev spaces $H_{t}^{(r,s)}(\Rb^n)$. The following result is obtained in \cite{Sh5}: For arbitrary reals $r,s$ and $t>-n/2$,  the Radon transform
$R: \Sc(\Rb^n)\to \Sc(\Sb^{n-1}\times \Rb)$
uniquely extends to a bijective isometry of Hilbert spaces
\[
R:H^{(r,s)}_t(\Rb^n)\to H^{(r,s+(n-1)/2)}_{t+(n-1)/2,e}(\Sb^{n-1}\times \Rb).
\]
Hereafter, the index $e$ stands for ``even'', i.e., $H^{(r,s}_{t,e}(\Sb^{n-1}\times \Rb)$ is the subspace of $H^{(r,s}_t(\Sb^{n-1}\times \Rb)$ consisting of functions satisfying $g(\o,p)=g(-\o,-p)$.
We make one crucial remark here. The second part of the GHL condition disappears in the range characterization when one extends the Radon transform from the Schwartz space to the weighted Sobolev spaces.
The special case of this result, when $r=s=t=0$ can be found in \cite{Helgason:Book}, and for the case when $r=0$, $s$ arbitrary and $t>-n/2$ in \cite{Sh3}. To show the bijectivity of the Radon transform on these spaces, one proceeds as follows. The Reshetnyak formula shows that, being an isometry, the range of the operator $R|_{H^{(r,s)}_t(\Rb^n)}$ is a closed subspace of $H^{(r,s+(n-1)/2)}_{t+(n-1)/2,e}(\Sb^{n-1}\times \Rb)$, and the proof is completed by showing that the orthogonal complement of the latter subspace (with respect to the $H^{(r,s+(n-1)/2)}_{t+(n-1)/2}(\Sb^{n-1}\times \Rb)$ scalar product) is equal to zero.

We next discuss the existence of Reshetnyak formulas for the ray transform. Let $S^{m}\Rb^{n}$ be the  ${n+m-1}\choose m$-dimensional complex vector space of rank $m$ symmetric tensor fields on ${\Rb}^n$.
    The family of oriented straight lines in ${\mathbb R}^n$ is parameterized by points of the manifold
    \[
    T{\mathbb S}^{n-1}=\{(x,\xi)\in{\mathbb R}^n\times{\mathbb R}^n\mid |\xi|=1,\langle x,\xi\rangle=0\}\subset{\mathbb R}^n\times{\mathbb R}^n,
    \]
    that is, the tangent bundle of the unit sphere ${\mathbb S}^{n-1}$. A point $(x,\xi)\in T{\mathbb S}^{n-1}$ determines the line $\{x+t\xi\mid t\in{\mathbb R}\}$. Let $\Sc(\Rb^{n}; S^{m}\Rb^{n})$ denote the Schwartz space of $S^{m}\Rb^{n}$-valued functions on $\Rb^n$ and $\Sc (T\Sb^{n-1})$ denote the Schwartz space of functions on $T\Sb^{n-1}$.
    The {\it ray transform} $I$ is the linear continuous operator
\begin{equation}
        I: \Sc(\Rb^{n}; S^{m}\Rb^{n})\to \Sc (T\Sb^{n-1})
                                         \label{1.1}
\end{equation}
    that is defined, for $f=(f_{i_1\dots i_m})\in\Sc(\Rb^{n}; S^{m}\Rb^{n})$, by
\begin{equation}
If (x,\xi)=\int\limits_{-\infty}^\infty f_{i_1\dots i_m}(x+t\xi)\,\xi^{i_1}\dots\xi^{i_m}\,\md t=\int\limits_{-\infty}^\infty \L f(x+t\xi),\xi^m\R\,d t\quad\big((x,\xi)\in T{\Sb}^{n-1}\big).
                                         \label{1.2}
\end{equation}

 Here and henceforth, we use the Einstein summation rule: the summation from 1 to $n$ is assumed over every index repeated in lower and upper positions in a monomial. We use either lower or upper indices for denoting coordinates of vectors and tensors.  Since we work in Cartesian coordinates only, there is no difference between covariant and contravariant tensors. For $m>0$, the ray transform $I$ has an infinite dimensional kernel consisting of so-called potential tensor fields. Therefore it is natural to restrict the ray transform to the orthogonal complement of potential tensor fields, called solenoidal tensor fields. Analogous to  the higher order Reshetnyak formulas for the Radon transform, the following family of higher order Reshetnyak formulas for the ray transform of symmetric tensor fields was derived recently in our work \cite{HORF_Work}: For an integer $r\geq 0$, real $s$ and $t>-n/2$, the $r^{\mathrm{th}}$ order Reshetnyak formula for the ray transform restricted to  solenoidal tensor fields is
 \Beq
 \lVert f\rVert_{H^{(r,s)}_t(\Rb^n; S^{m}\Rb^n)}=\lVert  If\rVert_{H^{\lb r, s+1/2\rb}_{t+1/2}(T\Sb^{n-1})}.
                                    \label{1.3}
 \Eeq
Definitions of the spaces in the formula are given in the next section.
 Two special cases of this Reshetnyak formula were derived by the second author. The first was for the case $r=0$ in \cite{Sh3}, and a first order Reshetnyak formula corresponding to $r=1$ (with a slightly different definition for the weighted Sobolev spaces) for the case of the ray transform of functions was given in \cite{Sh5}.

 Now we come to the question of range characterization for the ray transform.  John \cite{J} studied the range characterization of the ray transform of Schwartz class functions in $\Rb^3$ and showed that a function is in the range of this transform if and only if it solves a certain second order linear differential equation, the so-called {\it John equation}. Helgason \cite{Helgason_Acta_Paper} generalized this result to  an arbitrary $n\geq 3$ dimension. A system of linear second order differential equations appears in the case of $n\ge4$, they are still called John equations. The second author proved a range characterization result for the ray transform of symmetric tensor fields in Schwartz space in dimensions $n\geq 3$ \cite[Theorem 2.10.1]{Sharafutdinov:Book}.

For the $m$-tensor case, a big system of linear differential equations of order $2(m+1)$ appears, see equations \eqref{2.6} below. Observe that the left-hand side of every equation \eqref{2.6} is actually a product of $m$ John operators defined by \eqref{2.7}. Equations \eqref{2.6} are not independent, there are many linear relations between them since John operators commute with each other. One would like to distinguish a minimal subsystem of \eqref{2.6} which is still equivalent to the whole system. We do not try to minimize the system \eqref{2.6} in the present paper. Nevertheless, let us mention that, in the 3-dimensional case, the system \eqref{2.6} is equivalent to one linear differential equation of order $2(m+1)$ if an appropriate parametrization of the family of lines in ${\mathbb R}^3$ has been chosen \cite{NSV}. In an arbitrary dimension, minimization of the system \eqref{2.6} is discussed in \cite{De}.

Integral GHL conditions for the Radon transform are very different of John differential equations for the ray transform. Nevertheless, they are related as is shown by Denisiuk \cite{De} by restricting the ray transform to 2D planes.

Unlike GHL conditions, the John equations persist in the passage from the Schwartz space to weighted Sobolev spaces. In the scalar case, this fact is proved in \cite{Sh4}. The main goal of the present article is the proof of the latter fact for the ray transform of symmetric tensor fields. Namely, being understood in the distribution sense, the John equations characterize the range of the ray transform on the spaces $H^{(r,s)}_t(\Rb^n; S^{m}\Rb^n)$. Unlike the case of the Schwartz space, Reshetnyak formulas play an important role in the characterization.

In our opinion, the difference between GHL conditions  for the Radon transform and John equations for the ray transform is caused by the following. The ray transform  is over-determined in dimensions $n\geq 3$: we try to recover a tensor field depending on $n$ variables from a $(2n-2)$-dimensional set of information. On the other hand, in the case of the Radon transform, the problem of recovering $f$ from $Rf$ is formally determined.

Finally, we shortly discuss the ray transform in the 2-dimensional case. As we have mentioned, the ray transform of functions coincides with the Radon transform in 2-dimensions. For symmetric $m$-tensor fields, the range characterization of the 2-dimensional ray transform in the Scwartz space was obtained by Pantjukhina \cite{P}; some version of integral GHL conditions participates here. The range characterization of the 2D ray transform in weighted Sobolev spaces of symmetric $m$-tensor fields is not studied so far for $m>0$. The following interesting question remains open: What kind of conditions, either John's differential equations or GHL integral conditions, should be used in the latter problem?

\section{Preliminaries and statement of the main result}

In this section, we state all preliminary definitions and results required for a precise statement of the range characterization.

\subsection{Ray transform of symmetric tensor fields}

Recall from the previous section that $\Sc(\Rb^n; S^{m}\Rb^n)$ is the Schwartz space of symmetric $m$-tensor valued functions on $\Rb^n$ (each component of the tensor field is a Schwartz class function) and we  also introduced the Schwartz space ${\mathcal S}(T{\mathbb S}^{n-1})$
that is defined as follows. Given a function $\varphi\in C^\infty(T{\mathbb S}^{n-1})$, we extend it to some neighborhood of $T{\mathbb S}^{n-1}$ in
${\mathbb R}^n\times{\mathbb R}^n$ so that (the extension is again denoted by $\varphi$)
$$
\varphi(x,r\xi)=\varphi(x,\xi)\ (r>0),\quad \varphi(x+r\xi,\xi)=\varphi(x,\xi)\ (r\in{\mathbb R}).
$$
We say that a function $\varphi\in C^\infty(T{\mathbb S}^{n-1})$ belongs to ${\mathcal S}(T{\mathbb S}^{n-1})$ if the seminorm
$$
\|\varphi\|_{k,\alpha,\beta}=\sup\limits_{(x,\xi)\in T{\mathbb S}^{n-1}}
\left|(1+|x|)^k\partial^\alpha_x\partial^\beta_\xi\varphi(x,\xi)\right|
$$
is finite for every $k\in{\mathbb N}$ and for all multi-indices $\alpha$ and $\beta$. The family of these seminorms defines the topology on
${\mathcal S}(T{\mathbb S}^{n-1})$.

Recall that the ray transform \eqref{1.1} is defined by \eqref{1.2}.
In the case of an even $m$, the ray transform is the linear continuous operator
$$
I:{\mathcal S}({\Rb}^n;S^m{\Rb}^n)\rightarrow {\mathcal S}_e(TS^{n-1}),
$$
and in the case of an odd $m$,
$$
I:{\mathcal S}({\Rb}^n;S^m{\Rb}^n)\rightarrow {\mathcal S}_o(TS^{n-1}),
$$
where ${\mathcal S}_e(TS^{n-1})\ ({\mathcal S}_o(TS^{n-1}))$ is the subspace of ${\mathcal S}_e(TS^{n-1})$ consisting of functions satisfying $\varphi(x,-\xi)=\varphi(x,\xi)$ (satisfying $\varphi(x,-\xi)=-\varphi(x,\xi)$).
To unite these formulas, let us introduce the {\it parity} of $m$
$$
\pi(m)=\left\{\begin{array}{l}e\ \mbox{if}\ m\ \mbox{is even},\\ o\ \mbox{if}\ m\ \mbox{is odd}.\end{array}\right.
$$
Then the ray transform can be initially considered as a linear continuous operator
\begin{equation}
I:{\mathcal S}({\Rb}^n;S^m{\Rb}^n)\rightarrow {\mathcal S}_{\pi(m)}(TS^{n-1}).
                          \label{2.2}
\end{equation}

\subsection{Fourier slice theorem}
The Fourier transform of symmetric tensor fields
$$
F:\Sc(\Rb^n; S^m\Rb^n)\to\Sc(\Rb^n; S^m\Rb^n),\quad f\mapsto \wh f
$$
is defined component wise (hereafter $\I$ is the imaginary unit):
\[
\wh{f}_{i_1 \cdots i_{m}}(y)=\frac{1}{(2\pi)^{n/2}}\int \limits_{\Rb^n} e^{-\I \langle y,x\rangle} f_{i_1 \cdots i_m}(x) \, d x.
\]
The Fourier transform $F:\Sc(T\Sb^{n-1})\to\Sc(T\Sb^{n-1}),\ \vp\mapsto\wh\vp$ is defined as the $(n-1)$-dimensional Fourier transform over the subspace $\xi^{\perp}$:
    \[
   \wh{\vp}(y,\xi)=\frac{1}{(2\pi)^{(n-1)/2}}\int\limits_{\xi^{\perp}} e^{-\I \L y,x\R} \vp(x,\xi) \, d x\quad\big((y,\xi)\in T{\Sb}^{n-1}\big).
    \]
The Fourier slice theorem \cite[formula (2.1.5)]{Sharafutdinov:Book} states:
 \Beq
 \wh{If}(y,\xi)=\sqrt{2\pi}\L \wh{f}(y), \xi^{m}\R \mbox{ for } (y,\xi)\in T\Sb^{n-1}.
                                       \label{2.3}
 \Eeq

\subsection{Range characterization of the ray transform on the Schwartz space}

We first observe that the right-hand side of \eqref{1.2} makes sense for all $(x,\xi)\in \Rb^n\times \Rb^n\setminus \{0\}$, and we use notation $J$ to denote the extended operator. Strictly speaking, we define the continuous linear operator
$$
J: \Sc(\Rb^{n}; S^{m}\Rb^{n})\to C^\infty(\Rb^n\times \Rb^n\setminus \{0\})
$$
by
\begin{equation}
J f(x,\xi)= \int\limits_{-\infty}^\infty f_{i_1\dots i_m}(x+t\xi)\,\xi^{i_1}\dots\xi^{i_m}\, d t\quad
(x\in{\mathbb R}^n,0\neq\xi\in{\mathbb R}^n).
                                \label{2.4}
\end{equation}
This extension allows us to take partial derivatives of $Jf$ with respect to variables $x$ and $\xi$ freely. One can go back and forth between the original definition and the extended one by
\[
Jf|_{T\Sb^{n-1}}=If \quad \mbox{and} \quad Jf(x,\xi)=|\xi|^{m-1}If\lb x-\frac{\L x,\xi\R\xi}{|\xi|^2}, \frac{\xi}{|\xi|}\rb.
\]
Given a function $\vp\in T\Sb^{n-1}$, let us define an extension $\psi$ of this function to $\Rb^n\times \Rb^n\setminus \{0\}$ as follows:
\Beq
\psi(x,\xi)=|\xi|^{m-1}\vp\lb x-\frac{\L x,\xi\R\xi}{|\xi|^2}, \frac{\xi}{|\xi|}\rb.
                                   \label{2.5}
\Eeq

The range characterization theorem for the ray transform of symmetric tensor fields in dimension $n\geq 3$ is as follows:

\begin{theorem}  \label{Th2.1}
A function $\varphi\in{\mathcal S}(T{\mathbb S}^{n-1})\ (n\geq3)$ belongs to the range of the operator \eqref{1.1} if and only if the following two conditions hold:
\begin{enumerate}
\item[(1)] $\varphi(x,-\xi)=(-1)^m\varphi(x,\xi)$;

\item[(2)] the function $\psi\in C^\infty\big({\mathbb R}^n\times({\mathbb R}^n\setminus\{0\})\big)$, defined by \eqref{2.5}, satisfies the
equations
\begin{equation}
\Big(\frac{\partial^2}{\partial x^{i_1}\partial\xi^{j_1}}-\frac{\partial^2}{\partial x^{j_1}\partial\xi^{i_1}}\Big)\dots
\Big(\frac{\partial^2}{\partial x^{i_{m+1}}\partial\xi^{j_{m+1}}}-\frac{\partial^2}{\partial x^{j_{m+1}}\partial\xi^{i_{m+1}}}\Big)\psi=0
                                                   \label{2.6}
\end{equation}
for all indices $1\leq i_1,j_1,\dots,i_{m+1},j_{m+1}\leq n$.
\end{enumerate}
\end{theorem}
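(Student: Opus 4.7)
I begin with necessity. Condition (1) follows from the substitution $t\mapsto -t$ in \eqref{1.2}, which produces a factor of $(-1)^m$. For (2), working with the extension $Jf$ of \eqref{2.4}, the basic identity
\[
\mathcal{J}_{ij}\,h(x+t\xi)=0\quad\text{for every smooth }h:\Rb^n\to\Cb,
\]
with $\mathcal{J}_{ij}:=\partial_{x^i}\partial_{\xi^j}-\partial_{x^j}\partial_{\xi^i}$, holds because both mixed partials of $h(x+t\xi)$ equal $t(\partial_i\partial_j h)(x+t\xi)$. Consequently, when $\mathcal{J}_{ij}$ acts on $Jf$, all contributions in which $\partial_{\xi^j}$ falls on $f(x+t\xi)$ cancel, and one is left only with terms in which the $\partial_{\xi}$'s hit the monomial $\xi^{i_1}\cdots\xi^{i_m}$, strictly lowering its $\xi$-degree by one. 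After $m$ successive applications of John operators the monomial factor is exhausted, and the $(m+1)$-st application then falls entirely on $f(x+t\xi)$-type terms and produces zero by the basic identity above.

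Sufficiency is the main content. Given $\varphi\in\mathcal{S}(T\Sb^{n-1})$ satisfying (1) and (2), let $\hat\varphi(y,\xi)$ be the partial Fourier transform of $\varphi$ in $x\in\xi^\perp$, and introduce the $m$-homogeneous extension $\Phi(y,\xi):=|\xi|^m\hat\varphi(y,\xi/|\xi|)$ on the incidence set $\{(y,\xi):\xi\perp y,\ \xi\neq 0\}$ (using (1) to extend across negative scalings). A direct computation from \eqref{2.5} shows that the Fourier transform of $\psi$ in $x$ equals $(2\pi)^{(n+1)/2}\,\delta(\langle y,\xi\rangle)\,\Phi(y,\xi)$, and that the John operators $\mathcal{J}_{ij}$ on $\psi$ correspond, on the Fourier side, to the angular operators $L_{ij}:=y_i\partial_{\xi^j}-y_j\partial_{\xi^i}$ acting on $\Phi$. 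Since $L_{ij}$ commutes with multiplication by $\delta(\langle y,\xi\rangle)$, the delta factor passes through freely, and \eqref{2.6} reduces to $L_{i_1 j_1}\cdots L_{i_{m+1}j_{m+1}}\Phi=0$ on the incidence set. Fixing $y\neq 0$ and choosing coordinates with $y=(0,\dots,0,|y|)$, the only nonzero operators are $L_{an}=-|y|\partial_{\xi^a}$ for $a<n$, so the condition becomes the vanishing of every $(m+1)$-st $\xi$-partial derivative of $\Phi(y,\cdot)$ tangent to $y^\perp$. Combined with homogeneity of degree $m$, this forces $\Phi(y,\cdot)$ to be a homogeneous polynomial of degree $m$ on $y^\perp$, and hence of the form $\sqrt{2\pi}\langle \hat f(y),\xi^m\rangle$ for a symmetric $m$-tensor $\hat f(y)$ uniquely determined modulo tensors of the form $\mathrm{Sym}(y\otimes S)$ with $S\in S^{m-1}\Rb^n$.

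The main obstacle is to select the free radial part of $\hat f(y)$ so that the result lies in $\mathcal{S}(\Rb^n;S^m\Rb^n)$; the delicate point is smoothness across $y=0$. For $y\neq 0$, normalising by the solenoidal condition $y^{i_1}\hat f_{i_1\cdots i_m}(y)=0$ produces a smooth $\hat f$, with Schwartz bounds inherited from those on $\varphi$ via the smoothly varying orthogonal projection onto $y^\perp$; this choice, however, is typically discontinuous at $y=0$. The value $\hat f(0)$ is canonically pinned down as follows: $\hat\varphi(0,\cdot)$, being the limit of the data as $y\to 0$ along any direction $\omega\in\Sb^{n-1}$, has the property that its restriction to every great subsphere $\omega^\perp\cap\Sb^{n-1}$ is the restriction of a homogeneous polynomial of degree $m$ on $\omega^\perp$; by the classical fact that a function on $\Rb^n$ polynomial of degree $m$ on every hyperplane through the origin is itself polynomial of degree $m$, this uniquely determines $\hat f(0)$. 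To upgrade this to $C^\infty$-smoothness across $y=0$, I would Taylor-expand $\hat\varphi$ in $y$ about the origin, apply the John equations termwise to propagate the polynomial-in-$\xi$ structure to every Taylor coefficient, and thereby construct a smooth representative of $\hat f$ near the origin differing from the solenoidal one by a smooth symmetric tensor with appropriate vanishing at $y=0$. Setting $f:=\mathcal{F}^{-1}\hat f$, the Fourier slice identity \eqref{2.3} then yields $If=\varphi$, completing the argument.
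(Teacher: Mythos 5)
First, a point of reference: the paper does not prove Theorem \ref{Th2.1} at all — it is quoted from the literature, with the general case attributed to \cite[Theorem 2.10.1]{Sharafutdinov:Book} — so your attempt can only be measured against that external proof. Your necessity argument is correct and is the standard one: the parity follows from $t\mapsto-t$, and the observation that $J_{ij}$ annihilates $h(x+t\xi)$ (both mixed partials equal $t(\partial_i\partial_jh)(x+t\xi)$) shows that each application of a John operator to $Jf$ merely lowers the degree of the monomial $\xi^{i_1}\cdots\xi^{i_m}$, so $m+1$ applications give \eqref{2.6}. Your sufficiency argument also begins along the right lines, and in fact parallels the manipulations the paper performs in Section 4 for the Sobolev version: on the Fourier side $J_{ij}$ becomes $\I\,(y_i\partial_{\xi^j}-y_j\partial_{\xi^i})$, which is tangent to the incidence variety $\langle y,\xi\rangle=0$, and for each fixed $y\neq0$ the vanishing of all $(m+1)$-st tangential $\xi$-derivatives together with homogeneity forces $\widehat\varphi(y,\cdot)$ to be the restriction to $y^\perp$ of a homogeneous degree-$m$ polynomial $\langle\widehat F(y),\xi^m\rangle$, with $\widehat F(y)$ unique modulo $\mathrm{Sym}(y\otimes S)$. (You should state explicitly that this step needs $y^\perp\setminus\{0\}$ connected, i.e. $n\ge3$; this is exactly where the dimensional hypothesis enters, as the paper's closing Remark emphasizes.)

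The genuine gap is the final step, which is the real content of the theorem: producing from the family $\{\widehat F(y)\}_{y\neq0}$ a single tensor field $\widehat f\in\mathcal{S}(\Rb^n;S^m\Rb^n)$, in particular one that is $C^\infty$ across $y=0$, so that \eqref{2.3} yields $\varphi=If$. You correctly identify that the tangential normalization $y^{i_1}\widehat F_{i_1\cdots i_m}=0$ is smooth only on $\Rb^n\setminus\{0\}$ (the projection $\delta_{ij}-y_iy_j/|y|^2$ is singular at the origin), but your remedy — Taylor-expand $\widehat\varphi$ in $y$, apply the John equations termwise, and ``thereby construct a smooth representative'' — is a plan, not an argument. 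Nothing in the text shows that a correction of the form $\mathrm{Sym}(y\otimes S(y))$ can be chosen smoothly near $y=0$ and with Schwartz decay, nor that the Taylor coefficients of $\widehat\varphi$ at the origin assemble into the jet of a smooth symmetric tensor field; and the ``classical fact'' you invoke (a function that is a degree-$m$ polynomial on every hyperplane through the origin is itself a degree-$m$ polynomial) is a nontrivial lemma for $n\ge3$ and in any case only pins down the value $\widehat f(0)$, not differentiability there. This smooth-gluing-across-the-origin step is precisely the delicate part of the proof of \cite[Theorem 2.10.1]{Sharafutdinov:Book}, and as written your proposal does not close it.
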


We call \eqref{2.6} the {\it John equations} and the differential operators
\begin{equation}
J_{ij}=\frac{\partial^2}{\partial x^i\partial\xi^j}-\frac{\partial^2}{\partial x^j\partial\xi^i}:
C^\infty({\Rb}^n\times{\Rb}^n)\rightarrow C^\infty({\Rb}^n\times{\Rb}^n),
                                                   \label{2.7}
\end{equation}
the {\it John operators}. (We hope the reader is not confused by similarity between the notation $J$ for the operator \eqref{2.4} and $J_{ij}$ for John operators; both the notations are standard ones.)
In the case of $(m,n)=(0,3)$, Theorem \ref{Th2.1} is equivalent to John's result \cite{J}. In the case of $m=0$, Theorem \ref{Th2.1} was proved by Helgason \cite{Helgason_Acta_Paper}. For the proof in the general case see \cite[Theorem 2.10.1]{Sharafutdinov:Book}.

Our goal, as already mentioned, is to generalize the above result to weighted Sobolev spaces. The spaces are discussed in next few paragraphs.

\subsection{The vector fields $X_i$ and $\Xi_i$ and intrinsic John operators}
The following first order differential operators on $T\Sb^{n-1}$ were introduced in
\cite{{Krishnan:Manna:Sahoo:Sharafutdinov:2019},{Krishnan:Manna:Sahoo:Sharafutdinov:2020}}. Consider $\Rb^n\times \Rb^n$ with variables $(x,\xi)$  and introduce the following vector fields:
$$
\begin{aligned}
\wt{X}_i&= \frac{\PD}{\PD x_i}-\xi_i \xi^p \frac{\PD}{\PD x^p},\\
\wt{\Xi}_i& = \frac{\PD}{\PD \xi_i} -x_i \xi^p \frac{\PD}{\PD x^p} - \xi_i\xi^p \frac{\PD}{\PD \xi^p}.
\end{aligned}
$$
These vector fields are tangent to $T\Sb^{n-1}$  at every point  $(x,\xi)\in T\Sb^{n-1}$, as was shown in
\cite{Krishnan:Manna:Sahoo:Sharafutdinov:2019},  and therefore can be viewed as vector fields on $T\Sb^{n-1}$. Let $X_i$ and $\Xi_i$ be the restrictions of these vector fields to $T\Sb^{n-1}$.

The operators $X_i$ and $\Xi_i$ are related to the Fourier transform by the equalities \cite{Krishnan:Manna:Sahoo:Sharafutdinov:2019}:
$$
        \widehat{X_i\varphi}=\textsl{i}\,y_i\,\wh{\vp},\quad
        \widehat{\Xi_i\varphi}=\Xi_i\widehat\varphi,\quad
$$
which hold for every function $\varphi\in{\mathcal S}(T{\Sb}^{n-1})$ and for every $1\leq i\leq n$.

We use the vector fields $X_i$ and $\Xi_i$ to introduce the second order differential operators
$$
{\mathcal J}_{ij}:C^\infty(T{\Sb}^{n-1})\rightarrow C^\infty(T{\Sb}^{n-1})\quad(1\le i,j\le n)
$$
by
${\mathcal J}_{ij}=X_i\Xi_j-X_j\Xi_i$.
The operator $\Jc_{ij}$ are called {\it intrinsic John operators}.

\subsection{The spaces $H^{(r,s)}_t(T\Sb^{n-1})$}   \label{S:Xi}

We define the second order differential operator $\Delta_\xi$ on $T\Sb^{n-1}$ by
$\Delta_\xi=-\sum\limits_{i=1}^{n} \Xi_i^2$.
This operator is used for defining the weighted Sobolev spaces on $T\Sb^{n-1}$.
The following scalar product was introduced in \cite{Sh3}. For $\varphi_j \in{\mathcal S}(T{\Sb}^{n-1})\ (j=1,2)$,
\begin{equation}
    (\varphi_1,\varphi_2)_{H^s_t(T{\Sb}^{n-1})}=\frac{\Gamma\big(\frac{n-1}{2}\big)}{4\pi^{(n+1)/2}}
    \int\limits_{{\Sb}^{n-1}}\int\limits_{\xi^\bot}|y|^{2t}(1+|y|^2)^{s-t}\wh{\varphi_1}(y,\xi)\,\overline{\wh{\varphi_2}(y,\xi)}\, d y d \xi.
                                        \label{2.8}
\end{equation}
We showed in \cite{HORF_Work} that \eqref{1.1} is a positive semi-definite operator with respect to the scalar product \eqref{2.8} for any real $s$ and $t > -(n - 1)/2$.

\begin{definition} \label{D2.1}
\cite[Def 3.4]{HORF_Work}
For an integer $r\geq 0$, real $s$ and $t>-(n-1)/2$, introduce the norm on $\Sc(T\Sb^{n-1})$
 \begin{equation}
         \|\varphi\|^2_{H^{(r,s)}_{t}(T\Sb^{n-1})}=\big((\mathrm{Id}+\Delta_\xi)^r\varphi,\varphi\big)_{H^s_t(T{\Sb}^{n-1})}
 =\sum\limits_{l=0}^r{r\choose l}\big(\Delta_\xi^l\varphi,\varphi\big)_{H^s_t(T{\Sb}^{n-1})}
                                                 \label{2.9}
 \end{equation}
and define the Hilbert space $H^{(r,s)}_t(T{\Sb}^{n-1})$ as the completion of ${\mathcal S}(T{\Sb}^{n-1})$ with respect to the norm \eqref{2.9}.
\end{definition}

The Hilbert spaces $H^{(r,s)}_{t}(\Rb^n;S^{m}\Rb^n)$ are defined in such a way that there is the equality in \eqref{1.3}; see \cite[Def 5.2]{HORF_Work}. We do not oresent the precise definition of $H^{(r,s)}_{t}(\Rb^n;S^{m}\Rb^n)$ here since it does not participate in calculations related to the range characterization result below.

Recall that ${\mathcal S}_{\mathrm sol}({\Rb}^n;S^m{\Rb}^n)$ is the subspace of ${\mathcal S}({\Rb}^n;S^m{\Rb}^n)$ consisting of {\it solenoidal} tensor fields satisfying
\begin{equation}
\sum\limits_{p=1}^n\frac{\partial f_{pi_2\dots i_m}}{\partial x^p}=0.
                                        \label{2.10}
\end{equation}
By \cite[Theorem 1.1]{HORF_Work}, for all $n\ge2$ and $m\ge0$, the ray transform
$$
I:{\mathcal S}({\Rb}^n;S^m{\Rb}^n)\rightarrow {\mathcal S}_{\pi(m)}(T{\Sb}^{n-1})
$$
 extends to the isometric embedding of Hilbert spaces
\begin{equation}
I:H^{(r,s)}_{t,\mathrm{sol}}({\Rb}^n;S^m{\Rb}^n)\rightarrow H^{(r,s+1/2)}_{t+1/2,\pi(m)}(T{\Sb}^{n-1})
                                        \label{2.11}
\end{equation}
for every integer $r\ge0$, every real $s$ and every $t>-n/2$. See \eqref{2.2} for the additional index $\pi(m)$ on the right-hand side of \eqref{2.11}. Therefore the range of the operator \eqref{2.11} is a closed subspace of $H^{(r,s+1/2)}_{t+1/2,\pi(m)}(T{\Sb}^{n-1})$.

Let ${\mathcal S}'(T{\Sb}^{n-1})$ be the space of tempered distributions on $T{\Sb}^{n-1}$. By $\L\varphi\vert\psi\R$ we denote the value of a distribution $\varphi\in{\mathcal S}'(T{\Sb}^{n-1})$ on a test function $\psi\in{\mathcal S}(T{\Sb}^{n-1})$. The embedding
${\mathcal S}(T{\Sb}^{n-1})\subset{\mathcal S}'(T{\Sb}^{n-1})$ is defined by
\begin{equation}
\L\varphi\vert\psi\R=\int\limits_{{\Sb}^{n-1}}\int\limits_{\xi^\bot}\varphi(x,\xi)\psi(x,\xi)\,dxd\xi\quad\mbox{for}\quad
\varphi,\psi\in{\mathcal S}(T{\Sb}^{n-1}).
                         \label{2.12}
\end{equation}

\begin{proposition} \label{P2.1}
For an integer $r\ge0$, real $s$ and $t\in\big(-(n-1)/2,(n-1)/2\big)$, the space $H^{(r,s)}_t(T{\Sb}^{n-1})$ consists of tempered distributions. More precisely, the identity map of
${\mathcal S}(T{\Sb}^{n-1})$ extends to a continuous embedding
$H^{(r,s)}_t(T{\Sb}^{n-1})\subset{\mathcal S}'(T{\Sb}^{n-1})$.
\end{proposition}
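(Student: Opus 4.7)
The strategy is duality: I would establish a bound of the form
$$|\L\varphi|\psi\R| \le C\,\|\varphi\|_{H^{(r,s)}_t(T\Sb^{n-1})}\,N(\psi),\qquad \varphi,\psi\in\Sc(T\Sb^{n-1}),$$
with $N$ a continuous seminorm on the Schwartz space. Since $\Sc(T\Sb^{n-1})$ is dense in $H^{(r,s)}_t(T\Sb^{n-1})$ by Definition \ref{D2.1}, such an estimate extends the identity map of $\Sc(T\Sb^{n-1})$ to a continuous linear embedding $H^{(r,s)}_t(T\Sb^{n-1})\hookrightarrow\Sc'(T\Sb^{n-1})$, which is the claim.

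I would first reduce to the case $r=0$. Each vector field $\Xi_i$ is formally skew-adjoint on $L^2(T\Sb^{n-1})$ with its natural measure (as established in the references introducing $X_i,\Xi_i$). A short computation using $\L y,\xi\R=0$ gives $\Xi_i(|y|^2)=0$ on the bundle $\{y\in\xi^\perp\}$, so $\Xi_i$ commutes with multiplication by the weight $w_{s,t}(y)=|y|^{2t}(1+|y|^2)^{s-t}$; combined with the Fourier intertwining $\wh{\Xi_i\varphi}=\Xi_i\wh\varphi$ recalled in Section 2.4, this makes $\Delta_\xi=-\sum_i\Xi_i^2$ formally positive self-adjoint with respect to $(\cdot,\cdot)_{H^s_t(T\Sb^{n-1})}$. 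Expanding the binomial in \eqref{2.9} gives
$$\|\varphi\|^2_{H^{(r,s)}_t(T\Sb^{n-1})} = \|\varphi\|^2_{H^s_t(T\Sb^{n-1})} + \sum_{l=1}^{r}\binom{r}{l}\bigl(\Delta_\xi^l\varphi,\varphi\bigr)_{H^s_t(T\Sb^{n-1})} \ge \|\varphi\|^2_{H^s_t(T\Sb^{n-1})},$$
so I may assume $r=0$.

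For $r=0$, Parseval's theorem for the $(n-1)$-dimensional Fourier transform over $\xi^\perp$ together with \eqref{2.12} yields
$$\L\varphi|\psi\R = \int_{\Sb^{n-1}}\int_{\xi^\perp}\wh\varphi(y,\xi)\,\overline{\wh{\bar\psi}(y,\xi)}\,dy\,d\xi.$$
Splitting $1=w_{s,t}^{1/2}\cdot w_{s,t}^{-1/2}$ and applying Cauchy-Schwarz gives
$$|\L\varphi|\psi\R| \le C\,\|\varphi\|_{H^s_t(T\Sb^{n-1})}\left(\int_{\Sb^{n-1}}\int_{\xi^\perp}\frac{|\wh{\bar\psi}(y,\xi)|^2}{|y|^{2t}(1+|y|^2)^{s-t}}\,dy\,d\xi\right)^{1/2}.$$
The upper bound $t<(n-1)/2$ is exactly what makes the singular factor $|y|^{-2t}$ locally integrable over the $(n-1)$-dimensional fibers $\xi^\perp$, while the lower bound $t>-(n-1)/2$ is the one already built into the definition of $H^s_t(T\Sb^{n-1})$.

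The last step is to verify that the second factor above defines a continuous seminorm on $\Sc(T\Sb^{n-1})$. Because $\psi$ is Schwartz, the partial Fourier transform $\wh{\bar\psi}(y,\xi)$ is smooth and, together with all its $y$-derivatives, rapidly decaying in $y$ uniformly in $\xi\in\Sb^{n-1}$; splitting the $y$-integration into $\{|y|\le 1\}$ (controlled by $\sup_{y,\xi}|\wh{\bar\psi}|$ and the local integrability of $|y|^{-2t}$) and $\{|y|>1\}$ (controlled by some power $(1+|y|)^{-K}$ with $K$ chosen large) produces a bound by finitely many of the seminorms $\|\psi\|_{k,\alpha,\beta}$ introduced in Section 2.1. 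The only nontrivial points in the argument are (i) the formal positivity of $\Delta_\xi$ in the weighted inner product, which hinges on the commutation $\Xi_i(w_{s,t})=0$ on the bundle, and (ii) the two-sided condition $|t|<(n-1)/2$, which is exactly what balances the weights $w_{s,t}$ and $w_{s,t}^{-1}$ against the fiber dimension $n-1$ in the Cauchy--Schwarz step and prevents non-integrable singularities at $y=0$.
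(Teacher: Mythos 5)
Your proposal is correct and follows essentially the same route as the paper: reduce to $r=0$ via the continuous embedding $H^{(r,s)}_t\subset H^s_t$, then use Plancherel, split the weight $w_{s,t}^{1/2}\cdot w_{s,t}^{-1/2}$, and apply Cauchy--Schwarz to obtain $|\L\varphi\vert\psi\R|\le C_{s,t}\|\varphi\|_{H^s_t}\|\psi\|_{H^{-s}_{-t}}$. The only difference is that you explicitly verify that $\psi\mapsto\|\psi\|_{H^{-s}_{-t}}$ is a continuous seminorm on $\Sc(T\Sb^{n-1})$ (which is where $t<(n-1)/2$ enters), a step the paper leaves implicit.
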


\begin{proof}
There exists the continuous embedding
$H^{(r,s)}_t(T{\Sb}^{n-1})\subset H^s_t(T{\Sb}^{n-1})$. Therefore it suffices to prove Proposition \ref{P2.1} in the case of $r=0$, i.e., to prove that the identical map of ${\mathcal S}(T{\Sb}^{n-1})$ extends to a continuous embedding
$H^s_t(T{\Sb}^{n-1})\subset{\mathcal S}'(T{\Sb}^{n-1})$. To this end we will demonstrate the validity of the estimate
$$
|\L\varphi\vert\psi\R|\le C_{s,t}\|\varphi\|_{H^s_t(T{\Sb}^{n-1})}\|\psi\|_{H^{-s}_{-t}(T{\Sb}^{n-1})}\quad\mbox{for}\quad
\varphi,\psi\in{\mathcal S}(T{\Sb}^{n-1}).
$$

With the help of the Plancherel formula, \eqref{2.12} can be written as
$$
\L\varphi\vert\psi\R=\int\limits_{{\Sb}^{n-1}}\int\limits_{\xi^\bot}
\widehat\varphi(y,\xi)\widehat\psi(y,\xi)\,dyd\xi.
$$
Writing this in the form
$$
\L\varphi\vert\psi\R=\int\limits_{{\Sb}^{n-1}}\int\limits_{\xi^\bot}
\Big(|y|^t(1+|y|^2)^{(s-t)/2}\widehat\varphi(y,\xi)\Big)\Big(|y|^{-t}(1+|y|^2)^{-(s-t)/2}\widehat\psi(y,\xi)\Big)\,dyd\xi
$$
and applying the Cauchy -- Bunyakovski inequality, we obtain
$$
\begin{aligned}
|\L\varphi\vert\psi\R|&\le\Big(\int\limits_{{\Sb}^{n-1}}\int\limits_{\xi^\bot}
|y|^{2t}(1+|y|^2)^{s-t}|\widehat\varphi(y,\xi)|^2\,dyd\xi\Big)^{1/2}\times\\
&\times\Big(\int\limits_{{\Sb}^{n-1}}\int\limits_{\xi^\bot}|y|^{-2t}(1+|y|^2)^{t-s}|\widehat\psi(y,\xi)|^2\,dyd\xi\Big)^{1/2}
=C_{s,t}\|\varphi\|_{H^s_t(T{\Sb}^{n-1})}\|\psi\|_{H^{-s}_{-t}(T{\Sb}^{n-1})}.
\end{aligned}
$$
\end{proof}

By Proposition \ref{P2.1}, 
we have the linear continuous operator
\begin{equation}
\begin{aligned}
&{\mathcal J}_{i_1j_1}\Big({\mathcal J}_{i_2j_2}-(\xi_{i_2}X_{j_2}-\xi_{j_2}X_{i_2})\Big)
\Big({\mathcal J}_{i_3j_3}-2(\xi_{i_3}X_{j_3}-\xi_{j_3}X_{i_3})\Big)\dots\\
&\dots\Big({\mathcal J}_{i_{m+1}j_{m+1}}-m(\xi_{i_{m+1}}X_{j_{m+1}}-\xi_{j_{m+1}}X_{i_{m+1}})\Big)
:H^{(r,s+1/2)}_{t+1/2,\pi(m)}(T{\Sb}^{n-1})\rightarrow {\mathcal S}'(T{\Sb}^{n-1})
\end{aligned}
                          \label{2.13}
\end{equation}
under the only restriction $t\in\big(-n/2,(n-2)/2\big)$.

We are now ready to state the main result of this paper.

\begin{theorem}[{\rm Main theorem}] \label{Th2.2}
Let $n\ge3$ and $t\in\big(-(n-1)/2,(n-2)/2\big)$. For every integers $m\ge0,r\ge0$ and every real $s$, the range of the operator \eqref{2.11} coincides with the intersection of kernels of differential operators \eqref{2.13} for all $1\le i_1,j_1,\dots,i_{m+1},j_{m+1}\le n$. In other words, a ``function'' $\varphi\in H^{(r,s+1/2)}_{t+1/2,\pi(m)}(T{\Sb}^{n-1})$ belongs to the range of the operator \eqref{2.11} if and only if it satisfies the equations
\begin{equation}
\begin{aligned}
&{\mathcal J}_{i_1j_1}\Big({\mathcal J}_{i_2j_2}-(\xi_{i_2}X_{j_2}-\xi_{j_2}X_{i_2})\Big)
\Big({\mathcal J}_{i_3j_3}-2(\xi_{i_3}X_{j_3}-\xi_{j_3}X_{i_3})\Big)\dots\\
&\dots\Big({\mathcal J}_{i_{m+1}j_{m+1}}-m(\xi_{i_{m+1}}X_{j_{m+1}}-\xi_{j_{m+1}}X_{i_{m+1}})\Big)\varphi=0
\end{aligned}
                          \label{2.14}
\end{equation}
for all $1\le i_1,j_1,\dots,i_{m+1},j_{m+1}\le n$. Equations \eqref{2.14} are understood in the distribution sense.
\end{theorem}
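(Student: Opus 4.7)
The theorem splits into two implications: necessity (the range of \eqref{2.11} lies in the common kernel of the intrinsic John operators \eqref{2.13}) and sufficiency (the common kernel lies in the range). My plan is to prove necessity by approximation and continuity, and sufficiency by a Hilbert-space orthogonality argument combined with a Fourier-side analysis. The main technical difficulty is the sufficiency step.

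For necessity, given $f\in H^{(r,s)}_{t,\mathrm{sol}}(\Rb^n; S^m\Rb^n)$, pick a sequence $f_k\in\Sc_{\mathrm{sol}}(\Rb^n; S^m\Rb^n)$ approaching $f$ in the source norm (the required density of Schwartz solenoidal fields presumably being available from \cite{HORF_Work}). The Reshetnyak isometry \eqref{1.3}/\eqref{2.11} yields $If_k\to If$ in $H^{(r,s+1/2)}_{t+1/2,\pi(m)}(T\Sb^{n-1})$. By Theorem \ref{Th2.1}, each $If_k$ satisfies the extrinsic John equations \eqref{2.6} on the homogeneous extension \eqref{2.5}; a direct calculation translating between the extrinsic operators $J_{ij}$ and the intrinsic operators $\Jc_{ij}$ (using the definitions of $X_i,\Xi_i$ and their action on \eqref{2.5}, together with the extra first-order correction terms $\xi_iX_j-\xi_jX_i$ that account for the $|\xi|^{m-1}$ weight) reformulates \eqref{2.6} as \eqref{2.14}. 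Since \eqref{2.13} is continuous into $\Sc'(T\Sb^{n-1})$, the equations persist in the limit.

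For sufficiency, let $\varphi\in H^{(r,s+1/2)}_{t+1/2,\pi(m)}(T\Sb^{n-1})$ satisfy \eqref{2.14} in the distributional sense. Because \eqref{2.11} is an isometric embedding of Hilbert spaces, the range $R$ is closed. Orthogonally decompose $\varphi=\varphi_1\oplus\varphi_2$ with $\varphi_1\in R$ and $\varphi_2\in R^{\perp}$. By necessity $\varphi_1$ satisfies \eqref{2.14}, hence so does $\varphi_2=\varphi-\varphi_1$ by linearity. It remains to show $\varphi_2=0$.

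This last step is the heart of the matter and, in my view, the principal obstacle. The plan is to rewrite the orthogonality $(\varphi_2, If)_{H^{(r,s+1/2)}_{t+1/2}}=0$ (for all $f\in\Sc_{\mathrm{sol}}$) on the Fourier side, using the inner-product formula \eqref{2.8}, the Fourier slice theorem \eqref{2.3}, and the fact that the $(r,s+1/2)$ inner product is obtained from the $(0,s+1/2)$ one by inserting $(\mathrm{Id}+\Delta_\xi)^r$. After unwinding, this expresses the orthogonality as the statement that a weighted spherical moment of the form $\int_{\Sb^{n-1}\cap y^\perp} W(y,\xi)\widehat{\varphi_2}(y,\xi)\,\xi^{i_1}\cdots\xi^{i_m}\,d\xi$ is ``potential'' in $y$, i.e., lies in the orthogonal complement (inside symmetric $m$-tensors) of the subspace annihilated by contraction with $y$, which is the Fourier image of the solenoidal subspace. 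Simultaneously, the intrinsic John equations cut down the angular dependence of $\widehat{\varphi_2}(y,\cdot)$ on $\Sb^{n-1}\cap y^\perp$ to that of a degree-$m$ symmetric tensor polynomial (this is where the parity index $\pi(m)$ and the bound $t\in(-(n-1)/2,(n-2)/2)$ enter, the latter guaranteeing that \eqref{2.13} is actually defined as a distribution rather than merely formally). A pointwise-in-$y$ decomposition of $\widehat{\varphi_2}(y,\cdot)$ into symmetric trace-free tensor components (spherical harmonics on $\Sb^{n-1}\cap y^\perp$), combined with matching these two constraints component by component, should force every harmonic piece to vanish and hence conclude $\widehat{\varphi_2}=0$.
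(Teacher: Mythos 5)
Your overall architecture matches the paper's: necessity by density of Schwartz solenoidal fields, an intrinsic reformulation of Theorem \ref{Th2.1} (this is the content of Lemmas \ref{L3.1}--\ref{L3.2} and Theorem \ref{Th3.1}, and you correctly anticipate the first-order correction terms $\xi_iX_j-\xi_jX_i$ produced by the homogeneity weight in \eqref{2.5}), and passage to the limit in $\Sc'(T\Sb^{n-1})$; sufficiency by showing that the part of the kernel orthogonal to the range vanishes, via a Fourier-side analysis. Your sketch of the final vanishing step is also broadly consistent with the paper's Lemmas \ref{L4.1}--\ref{L4.2} and the proof of Lemma \ref{L3.3}: the John equations force $\widehat{\varphi_2}(y,\xi)=\widehat\Phi_{i_1\dots i_m}(y)\xi^{i_1}\cdots\xi^{i_m}$ with $\widehat\Phi$ tangential (this requires the homogeneous extension to the cone $V_n$, the reduction of \eqref{2.14} to pure $\xi$-derivatives in adapted coordinates, and the connectedness of $\Rb^{n-1}\setminus\{0\}$, which is where $n\ge3$ enters), after which the orthogonality reduces to the positive definiteness of the pairing by $\varepsilon^m$ on tangential tensor fields.

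The genuine gap is in your treatment of $r>0$. You take the orthogonal complement of the range with respect to the full $H^{(r,s+1/2)}_{t+1/2}$ inner product, so by \eqref{2.9} your orthogonality relation reads $\big((\mathrm{Id}+\Delta_\xi)^r\varphi_2,If\big)_{H^{s+1/2}_{t+1/2}}=0$. Since $\Delta_\xi=-\sum_i\Xi_i^2$ is a genuine second-order differential operator, and remains one on the Fourier side (where $\widehat{\Xi_i\varphi}=\Xi_i\widehat\varphi$), this is \emph{not} a weighted spherical moment of $\widehat{\varphi_2}$: for $r>0$ it involves derivatives of $\widehat{\varphi_2}$ in both $y$ and $\xi$, so the pointwise-in-$y$ spherical-harmonic matching you propose does not apply to it, and you give no indication of how to control $(\mathrm{Id}+\Delta_\xi)^r$ acting on functions of the form $\widehat\Phi_{i_1\dots i_m}(y)\xi^{i_1}\cdots\xi^{i_m}$. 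The paper sidesteps this entirely by a reduction to $r=0$: in \eqref{3.19} the orthogonal complement of $({\rm{Ran}}\,I)^{(r,s+1/2)}_{t+1/2}$ inside $({\rm{Ker}}\,J)^{(r,s+1/2)}_{t+1/2}$ is taken with respect to the \emph{weaker} scalar product $(\cdot,\cdot)_{H^{s+1/2}_{t+1/2}}$, and the continuous embedding $j$ of the diagram \eqref{3.17} transports the resulting element into $H^{s+1/2}_{t+1/2,\pi(m)}(T\Sb^{n-1})$, where the $r=0$ Main Lemma \ref{L3.3} — whose hypothesis \eqref{3.14} is exactly the $r=0$ orthogonality — finishes the proof. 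Without this reduction (or a new computation handling the $(\mathrm{Id}+\Delta_\xi)^r$ factor), your sufficiency argument only closes for $r=0$.
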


\section{Range characterization for ray transform of Schwartz class tensor fields using intrinsic John operators}
We start with the following lemma.

\begin{lemma} \label{L3.1}
Assume a function $\psi\in C^\infty\big({\Rb}^n\times({\Rb}^n\setminus\{0\})\big)$ to be positively homogeneous of degree $\lambda$ in $\xi$ and to satisfy
\begin{equation}
\psi(x+t\xi,\xi)=\psi(x,\xi)\quad(t\in{\Rb}).
                          \label{3.1}
\end{equation}
Let $\varphi\in C^\infty(T{\Sb}^{n-1})$ be the restriction of $\psi$ to $T{\Sb}^{n-1}$. Then the equality
\begin{equation}
(J_{ij}\psi)|_{T{\Sb}^{n-1}}=\big({\mathcal J}_{ij}-(\lambda+1)(\xi_iX_j-\xi_jX_i)\big)\varphi
                          \label{3.2}
\end{equation}
holds for any $1\le i,j\le n$.
\end{lemma}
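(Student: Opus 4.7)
The plan is to translate the two assumptions on $\psi$ into concrete first order identities and then expand the operators $\widetilde{X}_i, \widetilde{\Xi}_i$ on $\psi$, antisymmetrize, and compare with $J_{ij}\psi$ before restricting to $T\Sb^{n-1}$.

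First I would differentiate the two hypotheses on $\psi$: homogeneity of degree $\lambda$ in $\xi$ yields Euler's identity $\xi^p\,\partial\psi/\partial\xi^p=\lambda\psi$, while differentiating \eqref{3.1} at $t=0$ yields $\xi^p\,\partial\psi/\partial x^p=0$. Plugging these into the definitions of $\widetilde{X}_i$ and $\widetilde{\Xi}_i$ immediately collapses them to
\[
\widetilde{X}_i\psi=\frac{\partial\psi}{\partial x^i},\qquad
\widetilde{\Xi}_i\psi=\frac{\partial\psi}{\partial\xi^i}-\lambda\,\xi_i\psi.
\]

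Next I would compute $\widetilde{X}_i\widetilde{\Xi}_j\psi$ directly from these expressions. The main subtlety is that $\widetilde{\Xi}_j\psi$ no longer satisfies the $x$-translation invariance, so I cannot drop the $-\xi_i\xi^p\partial/\partial x^p$ term inside $\widetilde{X}_i$. To handle this I would differentiate the identity $\xi^p\,\partial\psi/\partial x^p=0$ once in $\xi^j$ to obtain the auxiliary relation $\xi^p\,\partial^2\psi/(\partial x^p\partial\xi^j)=-\partial\psi/\partial x^j$. Substituting back gives
\[
\widetilde{X}_i\widetilde{\Xi}_j\psi=\frac{\partial^2\psi}{\partial x^i\partial\xi^j}-\lambda\,\xi_j\frac{\partial\psi}{\partial x^i}+\xi_i\frac{\partial\psi}{\partial x^j}.
\]
Antisymmetrizing in $(i,j)$ then yields
\[
\widetilde{X}_i\widetilde{\Xi}_j\psi-\widetilde{X}_j\widetilde{\Xi}_i\psi
=J_{ij}\psi+(\lambda+1)\bigl(\xi_i\,\widetilde{X}_j\psi-\xi_j\,\widetilde{X}_i\psi\bigr),
\]
after using once more $\widetilde{X}_i\psi=\partial\psi/\partial x^i$.

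Finally, since $\widetilde{X}_i$ and $\widetilde{\Xi}_j$ are tangent to $T\Sb^{n-1}$, their action on any extension commutes with restriction; so restricting the above display to $T\Sb^{n-1}$ replaces $\widetilde{X}_i\widetilde{\Xi}_j\psi$ by $X_i\Xi_j\varphi$, the antisymmetric combination by $\mathcal{J}_{ij}\varphi$, and the correction term by $(\lambda+1)(\xi_iX_j-\xi_jX_i)\varphi$, yielding exactly \eqref{3.2}. The only real obstacle is the auxiliary identity $\xi^p\,\partial^2\psi/(\partial x^p\partial\xi^j)=-\partial\psi/\partial x^j$; everything else is routine bookkeeping once that is in hand.
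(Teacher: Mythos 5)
Your proposal is correct and follows essentially the same route as the paper: both verify the identity $(\tilde{X}_i\tilde{\Xi}_j-\tilde{X}_j\tilde{\Xi}_i)\psi=J_{ij}\psi+(\lambda+1)(\xi_i\partial_{x^j}-\xi_j\partial_{x^i})\psi$ by direct computation from Euler's identity and $\xi^p\partial\psi/\partial x^p=0$, and then restrict to $T\Sb^{n-1}$ using tangency of the vector fields. The only difference is bookkeeping order (you apply the function-specific identities to the first-order expressions before composing, while the paper expands the full second-order operator first), and your auxiliary identity $\xi^p\,\partial^2\psi/(\partial x^p\partial\xi^j)=-\partial\psi/\partial x^j$ is exactly the step the paper performs when transforming the fourth term of its expansion.
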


\begin{proof}
(Compare with \cite[Lemma 2.4]{Sh4})
By the very definition  of the operators
$X_i$
and
$\Xi_i$,
the equality
\begin{equation}
{\mathcal J}_{ij}\varphi=(X_i\Xi_j-X_j\Xi_i)\varphi=({\tilde X}_i{\tilde\Xi}_j-{\tilde X}_j{\tilde\Xi}_i)\psi
                                         \label{3.3}
\end{equation}
holds on
$T{\Sb}^{n-1}$.

By the definition of the operators
${\tilde X}_i$
and
${\tilde\Xi}_i$,
$$
\begin{aligned}
{\tilde X}_i{\tilde\Xi}_j-{\tilde X}_j{\tilde\Xi}_i&=
\Big(\frac{\partial}{\partial x^i}-\xi_i\xi^p\frac{\partial}{\partial x^p}\Big)
\Big(\frac{\partial}{\partial\xi^j}-x_j\xi^q\frac{\partial}{\partial x^q}-\xi_j\xi^q\frac{\partial}{\partial \xi^q}\Big)\\
&-\Big(\frac{\partial}{\partial x^j}-\xi_j\xi^p\frac{\partial}{\partial x^p}\Big)
\Big(\frac{\partial}{\partial\xi^i}-x_i\xi^q\frac{\partial}{\partial x^q}-\xi_i\xi^p\frac{\partial}{\partial \xi^q}\Big).
\end{aligned}
$$
After opening parentheses, this becomes
\begin{equation}
\begin{aligned}
{\tilde X}_i{\tilde\Xi}_j-{\tilde X}_j{\tilde\Xi}_i&=
\Big(\frac{\partial^2}{\partial x^i\partial\xi^j}-\frac{\partial^2}{\partial x^j\partial\xi^i}\Big)
+\Big(x_i\xi^p\frac{\partial^2}{\partial x^j\partial x^p}-x_j\xi^p\frac{\partial^2}{\partial x^i\partial x^p}\Big)\\
&+\Big(\xi_i\xi^p\frac{\partial^2}{\partial x^j\partial\xi^p}-\xi_j\xi^p\frac{\partial^2}{\partial x^i\partial\xi^p}\Big)
-\Big(\xi_i\xi^p\frac{\partial^2}{\partial x^p\partial\xi^j}-\xi_j\xi^p\frac{\partial^2}{\partial x^p\partial\xi^i}\Big)\\
&-(x_i\xi_j-x_j\xi_i)\xi^p\xi^q\frac{\partial^2}{\partial x^p\partial x^q}.
\end{aligned}
                                         \label{3.4}
\end{equation}
We transform the fourth term on the right-hand side as follows:
$$
\begin{aligned}
\xi_i\xi^p\frac{\partial^2}{\partial x^p\partial\xi^j}-\xi_j\xi^p\frac{\partial^2}{\partial x^p\partial\xi^i}
&=\xi_i\frac{\partial}{\partial\xi^j}\Big(\xi^p\frac{\partial}{\partial x^p}\Big)-\xi_i\frac{\partial}{\partial x^j}
-\xi_j\frac{\partial}{\partial\xi^i}\Big(\xi^p\frac{\partial}{\partial x^p}\Big)+\xi_j\frac{\partial}{\partial x^i}\\
&=\Big(\xi_i\frac{\partial}{\partial\xi^j}-\xi_j\frac{\partial}{\partial\xi^i}\Big)\xi^p\frac{\partial}{\partial x^p}
-\Big(\xi_i\frac{\partial}{\partial x^j}-\xi_j\frac{\partial}{\partial x^i}\Big).
\end{aligned}
$$
Substitute this expression into \eqref{3.4}
$$
\begin{aligned}
{\tilde X}_i{\tilde\Xi}_j-{\tilde X}_j{\tilde\Xi}_i&=
\Big(\frac{\partial^2}{\partial x^i\partial\xi^j}-\frac{\partial^2}{\partial x^j\partial\xi^i}\Big)
+\Big(x_i\xi^p\frac{\partial^2}{\partial x^j\partial x^p}-x_j\xi^p\frac{\partial^2}{\partial x^i\partial x^p}\Big)\\
&+\Big(\xi_i\xi^p\frac{\partial^2}{\partial x^j\partial\xi^p}-\xi_j\xi^p\frac{\partial^2}{\partial x^i\partial\xi^p}\Big)\\
&-\Big(\xi_i\frac{\partial}{\partial\xi^j}-\xi_j\frac{\partial}{\partial\xi^i}\Big)\xi^p\frac{\partial}{\partial x^p}
+\Big(\xi_i\frac{\partial}{\partial x^j}-\xi_j\frac{\partial}{\partial x^i}\Big)\\
&-(x_i\xi_j-x_j\xi_i)\xi^p\xi^q\frac{\partial^2}{\partial x^p\partial x^q}.
\end{aligned}
$$
On using the operators
$J_{ij},\ \L\xi,\partial_x\R=\xi^p\frac{\partial}{\partial x^p}$
and
$\L\xi,\partial_\xi\R=\xi^p\frac{\partial}{\partial \xi^p}$,
we write this in the form
\begin{equation}
\begin{aligned}
{\tilde X}_i{\tilde\Xi}_j-{\tilde X}_j{\tilde\Xi}_i&=J_{ij}
+\Big(x_i\frac{\partial}{\partial x^j}-x_j\frac{\partial}{\partial x^i}\Big)\L\xi,\partial_x\R
+\Big(\xi_i\frac{\partial}{\partial x^j}-\xi_j\frac{\partial}{\partial x^i}\Big)\L\xi,\partial_\xi\R\\
&-\Big(\xi_i\frac{\partial}{\partial\xi^j}-\xi_j\frac{\partial}{\partial\xi^i}\Big)\L\xi,\partial_x\R
+\Big(\xi_i\frac{\partial}{\partial x^j}-\xi_j\frac{\partial}{\partial x^i}\Big)
-(x_i\xi_j-x_j\xi_i)\L\xi,\partial_x\R^2.
\end{aligned}
                                         \label{3.5}
\end{equation}

Since $\psi$ is positively homogeneous of degree $\lambda$ in $\xi$ and satisfies \eqref{3.1}, the equations
$$
\L\xi,\partial_x\R\psi=0,\quad \L\xi,\partial_\xi\R\psi=\lambda\psi
$$
hold. Therefore the equality \eqref{3.5} for the function $\psi$ looks as follows:
\begin{equation}
({\tilde X}_i{\tilde\Xi}_j-{\tilde X}_j{\tilde\Xi}_i)\psi=J_{ij}\psi
+(\lambda+1)\Big(\xi_i\frac{\partial}{\partial x^j}-\xi_j\frac{\partial}{\partial x^i}\Big)\psi.
                                         \label{3.6}
\end{equation}

The relation
$$
\xi_i\frac{\partial}{\partial x^j}-\xi_j\frac{\partial}{\partial x^i}=\xi_i X_j-\xi_j X_i
$$
holds on $T{\Sb}^{n-1}$ as is seen from the definition of $X_i$. Therefore \eqref{3.6} implies that
$$
({\tilde X}_i{\tilde\Xi}_j-{\tilde X}_j{\tilde\Xi}_i)\psi=J_{ij}\psi
+(\lambda+1)(\xi_i X_j-\xi_j X_i)\varphi
$$
on $T{\Sb}^{n-1}$. Together with \eqref{3.3}, this gives \eqref{3.2}.
\end{proof}

\begin{lemma} \label{L3.2}
Assume a function $\psi\in C^\infty\big({\Rb}^n\times({\Rb}^n\setminus\{0\})\big)$ to be positively homogeneous of degree $\lambda$ in $\xi$ and to satisfy $\psi(x+t\xi,\xi)=\psi(x,\xi)$ for $t\in{\Rb}$.
Let $\varphi\in C^\infty(T{\Sb}^{n-1})$ be the restriction of $\psi$ to $T{\Sb}^{n-1}$. Then, for every $k=1,2,\dots$, the equality
\begin{equation}
\begin{aligned}
&(J_{i_1j_1}\dots J_{i_kj_k}\psi)|_{T{\Sb}^{n-1}}\\
&=\Big({\mathcal J}_{i_1j_1}-c_1(\lambda,k)\big(\xi_{i_1}X_{j_1}-\xi_{j_1}X_{i_1}\big)\Big)
\Big({\mathcal J}_{i_2j_2}-c_2(\lambda,k)\big(\xi_{i_2}X_{j_2}-\xi_{j_2}X_{i_2}\big)\Big)\\
&\dots\Big({\mathcal J}_{i_kj_k}-c_k(\lambda,k)\big(\xi_{i_k}X_{j_k}-\xi_{j_k}X_{i_k}\big)\Big)\varphi
\end{aligned}
                          \label{3.7}
\end{equation}
holds for any indices $1\le i_1,j_1,\dots,i_k,j_k\le n$, where
\begin{equation}
c_l(\lambda,k)=\lambda-k+l+1.
                                   \label{3.8}
\end{equation}
\end{lemma}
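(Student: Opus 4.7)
The plan is to prove the statement by induction on $k$. The base case $k=1$ is exactly Lemma \ref{L3.1}, since the formula \eqref{3.8} gives $c_1(\lambda,1)=\lambda+1$, matching the coefficient $\lambda+1$ appearing in \eqref{3.2}.

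For the inductive step, the key preparatory observation is that the operator $J_{ij}$ preserves the two structural hypotheses of Lemma \ref{L3.1} while shifting the $\xi$-homogeneity degree by $-1$. Concretely, I would verify the commutator relations
$$
[\L\xi,\partial_x\R,J_{ij}]=0,\qquad [\L\xi,\partial_\xi\R,J_{ij}]=-J_{ij},
$$
which are elementary from the definition $J_{ij}=\partial_{x^i}\partial_{\xi^j}-\partial_{x^j}\partial_{\xi^i}$. The first identity shows that $\L\xi,\partial_x\R\psi=0$ forces $\L\xi,\partial_x\R(J_{ij}\psi)=0$, i.e.\ the translation invariance $\psi(x+t\xi,\xi)=\psi(x,\xi)$ is preserved by $J_{ij}$. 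The second identity, together with Euler's relation $\L\xi,\partial_\xi\R\psi=\lambda\psi$, yields $\L\xi,\partial_\xi\R(J_{ij}\psi)=(\lambda-1)J_{ij}\psi$, so that $J_{ij}\psi$ is positively homogeneous of degree $\lambda-1$ in $\xi$. Iterating, $\widetilde\psi:=J_{i_2j_2}\dots J_{i_kj_k}\psi$ satisfies the hypotheses of Lemma \ref{L3.1} with parameter $\lambda-(k-1)$.

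I would then apply Lemma \ref{L3.1} to $\widetilde\psi$, obtaining
$$
(J_{i_1j_1}\widetilde\psi)|_{T\Sb^{n-1}}=\Big({\mathcal J}_{i_1j_1}-\big(\lambda-k+2\big)\big(\xi_{i_1}X_{j_1}-\xi_{j_1}X_{i_1}\big)\Big)\widetilde\varphi,
$$
with $\widetilde\varphi:=\widetilde\psi|_{T\Sb^{n-1}}$, and the scalar $\lambda-k+2$ is precisely $c_1(\lambda,k)$. Applying the induction hypothesis to $\widetilde\varphi$ yields the remaining $k-1$ factors, and the resulting coefficients align with those required in \eqref{3.7} by the shift identity
$$
c_l(\lambda,k-1)=\lambda-(k-1)+l+1=\lambda-k+(l+1)+1=c_{l+1}(\lambda,k),
$$
which matches positions $l=2,\dots,k$ in the product on the right-hand side of \eqref{3.7}. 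Composing the two displays finishes the induction.

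The only genuine obstacle is the commutator bookkeeping of the first two paragraphs; once the two-line commutator check is in hand, the rest is the arithmetic of coefficients $c_l(\lambda,k)$ and a clean application of Lemma \ref{L3.1} at the appropriate shifted homogeneity degree. No new interaction between the intrinsic operators ${\mathcal J}_{i_l j_l}$ and the correction terms $\xi_{i_l}X_{j_l}-\xi_{j_l}X_{i_l}$ needs to be understood beyond what is already implicit in Lemma \ref{L3.1}.
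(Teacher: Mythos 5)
Your proof is correct and follows essentially the same route as the paper's: induction on $k$ via Lemma \ref{L3.1}, combined with the observation that $J_{ij}$ commutes with $\langle\xi,\partial_x\rangle$ and lowers the $\xi$-homogeneity degree by one. The only cosmetic difference is that you peel off the outermost factor $J_{i_1j_1}$ whereas the paper peels off the innermost factor $J_{i_kj_k}$ (setting $\psi'=J_{i_kj_k}\psi$ of degree $\lambda-1$ and applying the induction hypothesis to $\psi'$); the coefficient bookkeeping works out identically either way.
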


\begin{proof}
The proof is going by induction in $k$.

By Lemma \ref{L3.1},
\begin{equation}
(J_{i_kj_k}\psi)|_{T{\Sb}^{n-1}}=\Big({\mathcal J}_{i_kj_k}-(\lambda+1)\big(\xi_{i_k}X_{j_k}-\xi_{j_k}X_{i_k}\big)\Big)\varphi.
                                   \label{3.9}
\end{equation}
This coincides with \eqref{3.7} for $k=1$ with
\begin{equation}
c_1(\lambda,1)=\lambda+1.
                                   \label{3.10}
\end{equation}

Fix indices $i_k$ and $j_k$ and set $\psi'=J_{i_kj_k}\psi$. Formula \eqref{3.9} can be rewritten as
\begin{equation}
\psi'|_{T{\Sb}^{n-1}}=\Big({\mathcal J}_{i_kj_k}-c_1(\lambda,1)\big(\xi_{i_k}X_{j_k}-\xi_{j_k}X_{i_k}\big)\Big)\varphi.
                                         \label{3.11}
\end{equation}
The function $\psi'$ is homogeneous of degree $\lambda-1$ in $\xi$. Besides this, it satisfies $\L\xi,\partial_x\R\psi'=0$ since the operators $\L\xi,\partial_x\R$ and $J_{kl}$ commute as easily follows from definitions of these operators. By the induction hypothesis,
$$
\begin{aligned}
(J_{i_1j_1}\dots J_{i_{k-1}j_{k-1}}\psi')&|_{T{\Sb}^{n-1}}=
\Big({\mathcal J}_{i_1j_1}-c_1(\lambda-1,k-1)\big(\xi_{i_1}X_{j_1}-\xi_{j_1}X_{i_1}\big)\Big)\dots\\
&\dots\Big({\mathcal J}_{i_{k-1}j_{k-1}}-c_k(\lambda-1,k-1)\big(\xi_{i_{k-1}}X_{j_{k-1}}-\xi_{j_{k-1}}X_{i_{k-1}}\big)\Big)
\big(\psi'|_{T{\Sb}^{n-1}}\big).
\end{aligned}
$$
Substituting the expression \eqref{3.11} into the right-hand side of the last formula and the value $\psi'=J_{i_kj_k}\psi$ into the left-hand side, we obtain \eqref{3.7} with the coefficients
$$
c_1(\lambda,k)=c_1(\lambda\!-\!1,k\!-\!1),\dots,
c_{k-1}(\lambda,k)=c_{k-1}(\lambda\!-\!1,k\!-\!1);\ c_k(\lambda,k)=c_1(\lambda,1).
$$
Together with \eqref{3.10}, last equalities imply \eqref{3.8}
\end{proof}

Let us distinguish the most important case when $k=m+1$ and $\lambda=m-1$.

\begin{corollary} \label{C3.1}
For an integer $m\ge0$, assume a function $\psi\in C^\infty\big({\Rb}^n\times({\Rb}^n\setminus\{0\})\big)$ to be positively homogeneous of degree $m-1$ in $\xi$ and to satisfy $\psi(x+t\xi,\xi)=\psi(x,\xi)$ for $t\in{\Rb}$.
Let $\varphi\in C^\infty(T{\Sb}^{n-1})$ be the restriction of $\psi$ to $T{\Sb}^{n-1}$. Then the equality
\begin{equation}
\begin{aligned}
(J_{i_1j_1}\dots J_{i_{m+1}j_{m+1}}\psi)|_{T{\Sb}^{n-1}}&=
{\mathcal J}_{i_1j_1}
\Big({\mathcal J}_{i_2j_2}-\big(\xi_{i_2}X_{j_2}-\xi_{j_2}X_{i_2}\big)\Big)
\Big({\mathcal J}_{i_3j_3}-2\big(\xi_{i_3}X_{j_3}-\xi_{j_3}X_{i_3}\big)\Big)\\
&\dots\Big({\mathcal J}_{i_{m+1}j_{m+1}}-m\big(\xi_{i_{m+1}}X_{j_{m+1}}-\xi_{j_{m+1}}X_{i_{m+1}}\big)\Big)\varphi
\end{aligned}
                          \label{3.12}
\end{equation}
holds for any indices $1\le i_1,j_1,\dots,i_{m+1},j_{m+1}\le n$.
\end{corollary}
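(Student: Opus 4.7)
The plan is to derive Corollary \ref{C3.1} as a direct specialization of Lemma \ref{L3.2}. The work has already been done in establishing \eqref{3.7}--\eqref{3.8}; all that remains is to substitute the specific values $k = m+1$ and $\lambda = m-1$ and verify that the coefficients $c_l(\lambda,k)$ collapse to the sequence $0, 1, 2, \dots, m$ displayed on the right-hand side of \eqref{3.12}.

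First I would check that the hypotheses of Lemma \ref{L3.2} are satisfied by the $\psi$ of Corollary \ref{C3.1}: indeed, $\psi$ is smooth on ${\Rb}^n\times({\Rb}^n\setminus\{0\})$, positively homogeneous in $\xi$ of the specified degree $\lambda = m-1$, and invariant under the flow $x\mapsto x+t\xi$. These are exactly the two hypotheses required to invoke \eqref{3.7}.

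Next, I would compute, for $1\le l\le m+1$,
\[
c_l(m-1,m+1)=(m-1)-(m+1)+l+1=l-1,
\]
using the explicit formula \eqref{3.8}. This yields $c_1=0,\ c_2=1,\ c_3=2,\ \dots,\ c_{m+1}=m$. Substituting these values into the right-hand side of \eqref{3.7} replaces the first factor by the bare intrinsic John operator ${\mathcal J}_{i_1j_1}$ (since its coefficient $0$ annihilates the correction term), while the subsequent factors acquire the correction weights $1,2,\dots,m$, exactly as written in \eqref{3.12}.

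No additional obstacle arises: the corollary is really a bookkeeping exercise, and the only conceptual content lies in the index shift encoded in \eqref{3.8}. If anything, the subtle point worth double-checking is that the factors in \eqref{3.12} are written in the order $l=1,\dots,m+1$ matching the order of the $J_{i_lj_l}$ on the left, which is consistent with the convention used in the statement of Lemma \ref{L3.2}; since all the operators involved commute in a suitable sense (as used implicitly in the inductive step of Lemma \ref{L3.2}), the ordering of the factors on the right is unambiguous and matches \eqref{3.12}.
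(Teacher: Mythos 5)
Your proposal is correct and is exactly how the paper obtains the corollary: it is stated as the specialization of Lemma \ref{L3.2} to $k=m+1$, $\lambda=m-1$, with the coefficients $c_l(m-1,m+1)=l-1$ giving the weights $0,1,\dots,m$ in \eqref{3.12}. The hypothesis check and the ordering remark are fine; no gap.
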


\begin{theorem}[Intrinsic form of Theorem \ref{Th2.1}]\label{Th3.1}
Let
$n\ge3$.
A function
$\varphi\in{\mathcal S}_{\pi(m)}(T{\Sb}^{n-1})$
belongs to the range of the operator \eqref{1.1}
if and only if it satisfies
\begin{equation}
\begin{aligned}
&{\mathcal J}_{i_1j_1}
\Big({\mathcal J}_{i_2j_2}-\big(\xi_{i_2}X_{j_2}-\xi_{j_2}X_{i_2}\big)\Big)
\Big({\mathcal J}_{i_3j_3}-2\big(\xi_{i_3}X_{j_3}-\xi_{j_3}X_{i_3}\big)\Big)\\
&\dots\Big({\mathcal J}_{i_{m+1}j_{m+1}}-m\big(\xi_{i_{m+1}}X_{j_{m+1}}-\xi_{j_{m+1}}X_{i_{m+1}}\big)\Big)\varphi=0
\end{aligned}
                                            \label{3.13}
\end{equation}
for all indices $1\le i_1,j_1,\dots,i_{m+1},j_{m+1}\le n$.
\end{theorem}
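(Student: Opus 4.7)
The plan is to reduce Theorem \ref{Th3.1} to the ambient version Theorem \ref{Th2.1} via Corollary \ref{C3.1}. Given $\varphi \in {\mathcal S}_{\pi(m)}(T\Sb^{n-1})$, I form the extension $\psi \in C^\infty\big(\Rb^n \times (\Rb^n \setminus \{0\})\big)$ defined by \eqref{2.5}. By construction, $\psi$ is positively homogeneous of degree $m-1$ in $\xi$ and satisfies $\psi(x+t\xi, \xi) = \psi(x, \xi)$, so Corollary \ref{C3.1} is applicable. Moreover, the parity condition built into $\varphi \in {\mathcal S}_{\pi(m)}(T\Sb^{n-1})$ is exactly condition (1) of Theorem \ref{Th2.1}. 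Thus Theorem \ref{Th2.1} reduces the claim to the equivalence: $\psi$ satisfies the John equations \eqref{2.6} on $\Rb^n \times (\Rb^n \setminus \{0\})$ if and only if $\varphi$ satisfies \eqref{3.13} on $T\Sb^{n-1}$.

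By Corollary \ref{C3.1}, the restriction of $F := J_{i_1j_1}\cdots J_{i_{m+1}j_{m+1}}\psi$ to $T\Sb^{n-1}$ is precisely the left-hand side of \eqref{3.13} applied to $\varphi$. The forward implication is then immediate: if $\psi$ satisfies \eqref{2.6} globally, then $F \equiv 0$ and in particular its restriction vanishes. The nontrivial half is the converse, for which I need to show that vanishing of $F$ on $T\Sb^{n-1}$ forces $F \equiv 0$ on all of $\Rb^n \times (\Rb^n \setminus \{0\})$. This will follow from two structural constraints that $F$ inherits from $\psi$.

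The first is a homogeneity count: each factor $J_{ij}$ contains exactly one $\partial/\partial \xi$, so it lowers the degree of positive homogeneity in $\xi$ by one; hence $F$ is homogeneous of degree $(m-1) - (m+1) = -2$ in $\xi$. The second is translation invariance: the operators $J_{ij}$ commute with $\L\xi, \partial_x\R = \xi^p\partial/\partial x^p$, a one-line commutator calculation already used in the proof of Lemma \ref{L3.2}, so the identity $\L\xi, \partial_x\R \psi = 0$ is passed to $F$, giving $F(x+t\xi, \xi) = F(x, \xi)$ for all $t \in \Rb$. Any $(x, \xi) \in \Rb^n \times (\Rb^n \setminus \{0\})$ is uniquely written as $\big(x_0 + t\xi_0,\, r\xi_0\big)$ with $(x_0, \xi_0) \in T\Sb^{n-1}$, $t \in \Rb$, $r > 0$, and the two properties then give $F(x, \xi) = r^{-2} F(x_0, \xi_0)$. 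Hence $F|_{T\Sb^{n-1}} = 0$ forces $F \equiv 0$, completing the reduction. I do not anticipate a serious obstacle: the argument is essentially a translation between the ambient and intrinsic pictures, with the computational content already packaged in Lemma \ref{L3.2} and Corollary \ref{C3.1}; the only point worth stating carefully is the uniqueness of the parametrization of $\Rb^n\times(\Rb^n\setminus\{0\})$ by $T\Sb^{n-1}\times\Rb\times\Rb_{>0}$ used in the descent step.
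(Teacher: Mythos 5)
Your proposal is correct and follows essentially the same route as the paper: reduce to Theorem \ref{Th2.1} via Corollary \ref{C3.1}, with the only nontrivial point being that vanishing of $J_{i_1j_1}\cdots J_{i_{m+1}j_{m+1}}\psi$ on $T\Sb^{n-1}$ propagates to all of $\Rb^n\times(\Rb^n\setminus\{0\})$ by homogeneity and translation invariance along $\xi$. Your write-up is in fact slightly more careful than the paper's on that descent step (you correctly track the homogeneity degree $-2$ of the iterated John derivative and the commutation of $J_{ij}$ with $\L\xi,\partial_x\R$), but the argument is the same.
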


\begin{proof}
Necessity. Let a function
$\varphi\in{\mathcal S}_{\pi(m)}(T{\Sb}^{n-1})$
belong to the range of the operator \eqref{1.1}. Define the function
$\psi\in C^\infty\big({\Rb}^n\times({\Rb}^n\setminus\{0\})\big)$
by \eqref{2.5}.
Then
$\psi|_{T{\Sb}^{n-1}}=\varphi$.
By Theorem \ref{Th2.1}, the function
$\psi$
satisfies the John equations \eqref{2.6}
for all indices $1\le i_1,j_1,\dots,i_{m+1},j_{m+1}\le n$. The left-hand side of equation \eqref{3.12} is equal to zero.  Equating the right-hand side of \eqref{3.12} to zero, we arrive to \eqref{3.13}.

Sufficiency. Let a function
$\varphi\in{\mathcal S}_{\pi(m)}(T{\Sb}^{n-1})$
satisfy \eqref{3.13}. We again define
$\psi\in C^\infty\big({\Rb}^n\times({\Rb}^n\setminus\{0\})\big)$
by \eqref{2.4}. Then
$\psi|_{T{\Sb}^{n-1}}=\varphi$ and Corollary \ref{C3.1} applies to the functions $\psi$ and $\varphi$.
By \eqref{3.13}, the right-hand side of \eqref{3.12} is equal to zero. Equating the left-hand side of \eqref{3.12} to zero, we obtain
$(J_{i_1j_1}\dots J_{i_{k-1}j_{k-1}}\psi)|_{T{\Sb}^{n-1}}=0$. Since $\psi$ satisfies
\[
\psi(x+t\xi,\xi)=\psi(x,\xi)\ (t\in\Rb),\quad \psi(x,t\xi)=\mbox{sgn}(t)\psi(x,\xi)\ (0\neq t\in\Rb),
\]
the latter equation implies $J_{i_1j_1}\dots J_{i_{k-1}j_{k-1}}\psi=0$.
Applying Theorem \ref{Th2.1}, we infer that
$\varphi$
belongs to the range of the operator \eqref{1.1}.
\end{proof}

Recall that ${\mathcal S}_{\mathrm sol}({\Rb}^n;S^m{\Rb}^n)$ is the space of smooth solenoidal fast decaying tensor fields defined by \eqref{2.10}.

\begin{lemma}[{\rm Main lemma}] \label{L3.3}
Let $n\ge3$ and $t\in\big(-(n-1)/2,(n-2)/2\big)$. For every integer $m\ge0$ and every real $s$, the following statement is valid.

Let a ``function'' $\varphi\in H^{s+1/2}_{t+1/2,\pi(m)}(T{\Sb}^{n-1})$ satisfy equations \eqref{2.14} in the distribution sense for all $1\le i_1,j_1,\dots,i_{m+1},j_{m+1}\le n$. Assume also that
\begin{equation}
(\varphi,If)_{H^{s+1/2}_{t+1/2}(T{\Sb}^{n-1})}=0\quad\mbox{\rm for every tensor field}\quad f\in {\mathcal S}_{sol}({\Rb}^n;S^m{\Rb}^n).
                                        \label{3.14}
\end{equation}
 Then $\varphi=0$.
\end{lemma}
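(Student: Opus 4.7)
The plan is to move everything to the Fourier side, where the ray transform has the explicit slice representation \eqref{2.3}, derive from the distributional John equations \eqref{2.14} a ``polynomial of degree $m$ in $\xi$'' structure theorem for $\hat\varphi$, and then exploit the arbitrariness of the solenoidal $f$ to kill $\hat\varphi$. Using the definition \eqref{2.8} of the weighted scalar product and the Fourier slice theorem \eqref{2.3}, the orthogonality \eqref{3.14} becomes
\[
\int\limits_{\Sb^{n-1}}\int\limits_{\xi^\bot}|y|^{2t+1}(1+|y|^2)^{s-t}\hat\varphi(y,\xi)\overline{\L\hat f(y),\xi^m\R}\,dyd\xi=0
\]
for every $f\in\mathcal S_{sol}(\Rb^n;S^m\Rb^n)$. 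With the Fourier identities $\widehat{X_iu}=\I y_i\hat u$ and $\widehat{\Xi_iu}=\Xi_i\hat u$, each factor $\mathcal J_{ij}-k(\xi_iX_j-\xi_jX_i)$ in \eqref{2.14} Fourier-transforms into the operator $\I\big(y_i(\Xi_j+k\xi_j)-y_j(\Xi_i+k\xi_i)\big)$ for $k=0,1,\dots,m$, producing a system of distributional equations on $\hat\varphi$ on $T\Sb^{n-1}$.

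The central step is a distributional analogue of Theorem \ref{Th3.1}: I would show that any tempered distribution $\hat\varphi$ on $T\Sb^{n-1}$ annihilated by the above Fourier-transformed system must be of the form $\hat\varphi(y,\xi)=\sqrt{2\pi}\L g(y),\xi^m\R$ for some tempered distribution $g$ on $\Rb^n$ valued in $S^m\Rb^n$. To pass to the smooth setting I would extend $\hat\varphi$ off $T\Sb^{n-1}$ by \eqref{2.5} with $\lambda=m-1$; by Corollary \ref{C3.1}, the Fourier-side equations then lift to the classical John equations \eqref{2.6} for the extension. Mollifying $\hat\varphi$ in the $y$-variable (this commutes with the $\xi$-derivatives appearing in the operators and produces only a well-controlled commutator with the polynomial factors $y_i$) yields smooth approximants to which Theorem \ref{Th2.1} directly applies; passing to the limit in $\mathcal S'(T\Sb^{n-1})$ gives the desired $g$. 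The restriction $t\in(-(n-1)/2,(n-2)/2)$ is precisely what guarantees, via Proposition \ref{P2.1}, that both $\hat\varphi$ and its dual test objects belong to $\mathcal S'(T\Sb^{n-1})$, so the limit procedure is meaningful.

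Substituting $\hat\varphi(y,\xi)=\sqrt{2\pi}\L g(y),\xi^m\R$ into the Fourier-side orthogonality, for each fixed $y\ne 0$ the inner $\xi$-integration runs over $\Sb^{n-1}\cap y^\bot$ and produces a positive-definite pairing on the space of symmetric $m$-tensors lying in $y^\bot$. Because the solenoidal condition $\sum_p\partial^pf_{p\,i_2\dots i_m}=0$ is equivalent, in Fourier variables, to $y\cdot\hat f(y)=0$, the test vectors $\hat f(y)$ range over \emph{all} Schwartz symmetric $m$-tensors lying in $y^\bot$; this forces the $y^\bot$-component of $g(y)$ to vanish as a distribution, and since $\xi\in y^\bot$ on $T\Sb^{n-1}$ only that component of $g$ appears in $\hat\varphi$. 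Therefore $\hat\varphi\equiv 0$ and $\varphi=0$. The main obstacle is the distributional structure theorem of the middle paragraph: its smooth counterpart (Theorem \ref{Th3.1}) is already available, but its extension to distributions requires a regularization argument that must be compatible with the weighted spaces $H^{(r,s)}_t$, and it is this compatibility that dictates the hypothesis $t\in(-(n-1)/2,(n-2)/2)$.
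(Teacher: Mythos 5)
Your overall strategy --- pass to the Fourier side via the slice theorem, prove that the distributional John equations force $\widehat\varphi(y,\xi)$ to be a homogeneous polynomial of degree $m$ in $\xi$ with tangential tensor coefficients, and then use the positive definiteness of the angular integral $\int_{{\Sb}^{n-1}\cap y^\bot}\xi_{i_1}\dots\xi_{i_m}\xi_{j_1}\dots\xi_{j_m}\,d\xi$ on tangential tensors --- is exactly the paper's strategy, and your computation of the Fourier transform of the factors ${\mathcal J}_{ij}-k(\xi_iX_j-\xi_jX_i)$ as $\textsl{i}\big(y_i(\Xi_j+k\xi_j)-y_j(\Xi_i+k\xi_i)\big)$ agrees with the system \eqref{4.7}. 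The final step is also essentially the paper's: solenoidality of $f$ means $\widehat f(y)$ ranges over all tangential Schwartz tensors, the weight $|y|^{2t-1}$ is locally integrable precisely because $t>-(n-1)/2$, and the pairing \eqref{4.36} is positive definite on $L^2_\top$, forcing $\widehat\Phi=0$.

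The genuine gap is in your ``central step,'' the distributional structure theorem, and your proposed mollification route does not close it. First, Corollary \ref{C3.1} concerns the physical-space operators $J_{ij}=\partial^2_{x^i\xi^j}-\partial^2_{x^j\xi^i}$, whose intrinsic form involves $X_i$ acting by differentiation; the Fourier-transformed system involves \emph{multiplication} by $y_i$ in place of $X_i$, so the extension \eqref{2.5} of $\widehat\varphi$ does not satisfy the John equations \eqref{2.6}, and Theorem \ref{Th2.1} is not the relevant smooth statement. (Also, the correct homogeneity for the Fourier-side extension is degree $m$, matching $\widehat{If}(y,\xi)=\sqrt{2\pi}\L\widehat f(y),\xi^m\R$, not $m-1$.) Second, even for the correct system, mollifying a tempered distribution in $y$ yields smooth functions with at best polynomial growth, not Schwartz-class ones, so no Schwartz-space range theorem ``directly applies''; and recovering the polynomial structure from a limit of ray transforms requires exactly the control you are trying to establish. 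The paper avoids all of this (Lemmas \ref{L4.1}--\ref{L4.2}): extend $\widehat\varphi$ to the cone $V_n=\{\L y,\xi\R=0,\ y\ne0,\ \xi\ne0\}$ homogeneously of degree $m$ in $\xi$; on $V_n$ the system collapses, after using the homogeneity to kill the correction terms, to products of the first-order operators $y_i\partial_{\xi^j}-y_j\partial_{\xi^i}$, which span the tangent spaces of the fibers of $(y,\xi)\mapsto y$; in fiber-adapted coordinates the equations read $\partial^{m+1}\widehat\psi/\partial\xi^{\alpha_1}\dots\partial\xi^{\alpha_{m+1}}=0$, and a distribution annihilated by all pure $(m+1)$-st $\xi$-derivatives on a \emph{connected} fiber is a polynomial of degree $m$ in $\xi$ --- this is where $n\ge3$ enters, a point your proposal does not address (for $n=2$ the fiber ${\Rb}^1\setminus\{0\}$ is disconnected and the conclusion fails). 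You would need to replace your mollification argument by something of this kind for the proof to go through.
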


We emphasize that the integer $r$ does not participate in Lemma \ref{L3.3} (more precisely, $r=0$ in the lemma). It is the main advantage of  Lemma \ref{L3.3} as compared with Theorem \ref{Th2.2}. Even in the case of $r=0$, Lemma \ref{L3.3} has one more  advantage: we do not need to recall the definition of the norm $\|\cdot\|_{H^{(0,s)}_{t,\mathrm{sol}}({\Rb}^n;S^m{\Rb}^n)}$.

The proof of Lemma \ref{L3.3} will be presented in the next section. Now, we present the proof of Theorem \ref{Th2.2} assuming Lemma \ref{L3.3} to be valid.

\begin{proof}[Proof of Theorem \ref{Th2.2}]
We start with the easy proof of the ``only if'' statement of Theorem \ref{Th2.2}.
Given $f\in H^{(r,s)}_{t,\mathrm{sol}}({\Rb}^n;S^m{\Rb}^n)$, set $\varphi=If\in H^{(r,s+1/2)}_{t+1/2,\pi(m)}(T{\Sb}^{n-1})$. Since
${\mathcal S}_{\mathrm{sol}}({\Rb}^n;S^m{\Rb}^n)$ is a dense subspace of $H^{(r,s)}_{t,\mathrm{sol}}({\Rb}^n;S^m{\Rb}^n)$, we can choose a sequence of tensor fields $f_k\in{\mathcal S}_{sol}({\Rb}^n;S^m{\Rb}^n)\ (k=1,2,\dots)$ such that
\begin{equation}
f_k\to f\quad\mbox{in}\quad H^{(r,s)}_{t,\mathrm{sol}}({\Rb}^n;S^m{\Rb}^n)\quad\mbox{as}\quad k\to\infty.
                          \label{3.15}
\end{equation}
Set $\varphi_k=If_k$.
By Theorem \ref{Th3.1}, every function $\varphi_k\in{\mathcal S}_{\pi(m)}(T{\Sb}^{n-1})$ satisfies the equations
\begin{equation}
\begin{aligned}
&{\mathcal J}_{i_1j_1}\Big({\mathcal J}_{i_2j_2}-(\xi_{i_2}X_{j_2}-\xi_{j_2}X_{i_2})\Big)
\Big({\mathcal J}_{i_3j_3}-2(\xi_{i_3}X_{j_3}-\xi_{j_3}X_{i_3})\Big)\dots\\
&\dots\Big({\mathcal J}_{i_{m+1}j_{m+1}}-m(\xi_{i_{m+1}}X_{j_{m+1}}-\xi_{j_{m+1}}X_{i_{m+1}})\Big)\varphi_k=0
\end{aligned}
                          \label{3.16}
\end{equation}
for all $1\le i_1,j_1,\dots,i_{m+1},j_{m+1}\le n$. Of course \eqref{3.15} implies that
$$
\varphi_k\to \varphi\quad\mbox{in}\quad {\mathcal S}'(T{\Sb}^{n-1})\quad\mbox{as}\quad k\to\infty.
$$
Passing to limit as $k\to\infty$ in \eqref{3.16}, we arrive to \eqref{2.14}.

Now, we prove the ``if'' statement of Theorem \ref{Th2.2}.
The following diagram is helpful for better understanding our arguments
\begin{equation}
\begin{aligned}
&H^{(r,s)}_{t,\mathrm{sol}}({\Rb}^n;S^m{\Rb}^n)\quad{\stackrel I\longrightarrow}\quad H^{(r,s+1/2)}_{t+1/2,\pi(m)}(T{\Sb}^{n-1})\\
&\qquad i\downarrow\qquad\qquad\qquad\qquad\qquad\qquad\downarrow j\\
&H^{s}_{t,\mathrm{sol}}({\Rb}^n;S^m{\Rb}^n)\quad{\stackrel I\longrightarrow}\quad  H^{s+1/2}_{t+1/2,\pi(m)}(T{\Sb}^{n-1})
\end{aligned}
                          \label{3.17}
\end{equation}
Here horizontal arrows (the ray transform) are isometric embeddings of Hilbert spaces while vertical arrows are continuous embeddings.
The diagram is commutative, i.e., $iI=Ij$.

Let $({\rm{Ker}}\,J)^{(r,s+1/2)}_{t+1/2}$ be the closed subspace of $H^{(r,s+1/2)}_{t+1/2,\pi(m)}(T{\Sb}^{n-1})$ consisting of ``functions'' $\varphi$ satisfying equations \eqref{3.16} for all $1\le i_1,j_1,\dots,i_{m+1},j_{m+1}\le n$. We denote the range of the operator \eqref{2.11} by $({\rm{Ran}}\,I)^{(r,s+1/2)}_{t+1/2}$. It is also a closed subspace of $H^{(r,s+1/2)}_{t+1/2,\pi(m)}(T{\Sb}^{n-1})$ by the Reshetnyak formula. By the ``only if'' statement of Theorem \ref{Th2.2},
\begin{equation}
({\rm{Ran}}\,I)^{(r,s+1/2)}_{t+1/2}\subset({\rm{Ker}}\,J)^{(r,s+1/2)}_{t+1/2}.
                          \label{3.18}
\end{equation}
We have to prove that actually there is the equality in \eqref{3.18}. To this end we represent $({\rm{Ker}}\,J)^{(r,s+1/2)}_{t+1/2}$ as
\begin{equation}
({\rm{Ker}}\,J)^{(r,s+1/2)}_{t+1/2}=({\rm{Ran}}\,I)^{(r,s+1/2)}_{t+1/2}\oplus\Big(({\rm{Ran}}\,I)^{(r,s+1/2)}_{t+1/2}\big)^\bot,
                          \label{3.19}
\end{equation}
where $\Big(({\rm{Ran}}\,I)^{(r,s+1/2)}_{t+1/2}\big)^\bot$ is the orthogonal complement of $({\rm{Ran}}\,I)^{(r,s+1/2)}_{t+1/2}$ in
$({\rm{Ker}}\,J)^{(r,s+1/2)}_{t+1/2}$ with respect to the scalar product $(\cdot,\cdot)_{H^{s+1/2}_{t+1/2}(T{\Sb}^{n-1})}$. The embedding \eqref{3.18} is the equality if and only if the second summand on the right-hand side of \eqref{3.19} is equal to zero.

Let $\varphi\in\Big(({\rm{Ran}}\,I)^{(r,s+1/2)}_{t+1/2}\big)^\bot$. Then the ``function'' $\varphi$ satisfies equations \eqref{2.14} for all $1\le i_1,j_1,\dots,i_{m+1},j_{m+1}\le n$ and the statement \eqref{3.14} holds for $\varphi$.

Recall that $j$ denotes the right vertical arrow in the diagram \eqref{3.17}. The embedding $j$ obviously preserves the scalar product $(\cdot,\cdot)_{H^{s+1/2}_{t+1/2}(T{\Sb}^{n-1})}$. Therefore \eqref{3.14} implies
$$
(j\varphi,If)_{H^{s+1/2}_{t+1/2}(T{\Sb}^{n-1})}=0\quad\mbox{\rm for every tensor field}\quad f\in {\mathcal S}_{sol}({\Rb}^n;S^m{\Rb}^n).
$$
Besides this, the ``function'' $j\varphi$ satisfies equations \eqref{2.14} for all $1\le i_1,j_1,\dots,i_{m+1},j_{m+1}\le n$ since these equations are understood in the distribution sense. Thus,  the ``function'' $j\varphi\in  H^{s+1/2}_{t+1/2,\pi(m)}(T{\Sb}^{n-1})$ satisfies all hypotheses of Lemma \ref{L3.3}. By the lemma, $j\varphi=0$. Since $j$ is an injective operator, this implies $\varphi=0$.
\end{proof}

\section{Proof of Lemma \ref{L3.3}}

Recall that the weighted $L^2$-space
$L^{2,s}_t({\Rb}^n)$
was defined in \cite[Section 3]{Sh4} for
$s\in\Rb$
and
$t>-n/2$
as the completion of
${\mathcal S}({\Rb}^n)$
with respect to  the norm
$$
\|f\|^2_{L^{2,s}_t({\Rb}^n)}=\int\limits_{{\Rb}^n}|y|^{2t}(1+|y|^2)^{s-t}|f(y)|^2\, d y.
$$

The Hilbert space $L^{2,s}_{t,e}(T{\Sb}^{n-1})$ of even ``functions'' was defined in \cite[Section 3]{Sh4} for $s\in\Rb$
and
$t>-(n-1)/2$
as the completion of
${\mathcal S}_e(T{\Sb}^{n-1})$ (the index $e$ stands for ``even'')
with respect to  the norm
\begin{equation}
\|\varphi\|^2_{L^{2,s}_t(T{\Sb}^{n-1})}=\frac{\Gamma\big(\frac{n-1}{2}\big)}{4\pi^{(n+1)/2}}
\int\limits_{{\Sb}^{n-1}}\int\limits_{\xi^\bot}|y|^{2t}(1+|y|^2)^{s-t}|\varphi(y,\xi)|^2\, d y d \xi.
                                              \label{4.1}
\end{equation}
By \cite[Lemma 3.1]{Sh4}, the Fourier transform
$F:{\mathcal S}_e(T{\Sb}^{n-1})\rightarrow{\mathcal S}_e(T{\Sb}^{n-1})$
extends to the bijective isometry of Hilbert spaces
$$
F:H^s_{t,e}(T{\Sb}^{n-1})\rightarrow L^{2,s}_{t,e}(T{\Sb}^{n-1}).
$$

Quite similarly, the weighted $L^2$-space $L^{2,s}_{t,o}(T{\Sb}^{n-1})$ of odd ``functions'' is defined  for $s\in\Rb$
and
$t>-(n-1)/2$
as the completion of
${\mathcal S}_o(T{\Sb}^{n-1})$ (the index $o$ stands for ``odd'')
with respect to  the same norm \eqref{4.1}.
The Fourier transform
$F:{\mathcal S}_o(T{\Sb}^{n-1})\rightarrow{\mathcal S}_o(T{\Sb}^{n-1})$
extends to the bijective isometry of Hilbert spaces
$$
F:H^s_{t,o}(T{\Sb}^{n-1})\rightarrow L^{2,s}_{t,o}(T{\Sb}^{n-1}).
$$

We will first prove the following statement.

\begin{lemma} \label{L4.1}
Let $m\ge0$ and $n\ge3$ be integers, $s\in\Rb$ and $t\in\big(-(n-1)/2,(n-1)/2\big)$. Given
a ``function'' $\varphi\in H^s_{t,\pi(m)}(T{\Sb}^{n-1})$ satisfying equations \eqref{2.14} in the distribution sense for all $1\le i_1,j_1,\dots,i_{m+1},j_{m+1}\le n$, let $\widehat\varphi\in L^{2,s}_{t,\pi(m)}(T{\Sb}^{n-1})$ be the Fourier transform of $\varphi$. There exists a unique symmetric tensor field $\widehat\Phi=(\widehat\Phi_{i_1\dots i_m})\in L^{2,s}_t({\Rb}^n;S^m{\Rb}^n)$ satisfying
\begin{equation}
y^p\,\widehat\Phi_{pi_2\dots i_m}(y)=0
                                        \label{4.2}
\end{equation}
and such that
\begin{equation}
\widehat\varphi(y,\xi)=\widehat\Phi_{i_1\dots i_m}(y)\xi^{i_1}\dots\xi^{i_m}\quad \big((y,\xi)\in T{\Sb}^{n-1}\big).
                                        \label{4.3}
\end{equation}
\end{lemma}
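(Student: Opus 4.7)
The strategy is to Fourier-transform equations \eqref{2.14} and prove, for almost every fixed $y\neq 0$, that the resulting system forces $\widehat\varphi(y,\cdot)$ on the slice sphere $\Sb^{n-2}_y:=\{\xi\in\Sb^{n-1}:\langle y,\xi\rangle=0\}$ to be the restriction of a homogeneous polynomial of degree $m$ in $\xi$; the coefficients of that polynomial, orthogonalized against $y$, will then give $\widehat\Phi(y)$.

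Using $\widehat{X_i\varphi}=\I y_i\widehat\varphi$ and $\widehat{\Xi_i\varphi}=\Xi_i\widehat\varphi$, equations \eqref{2.14} become
\[
(y_{i_1}\Xi_{j_1}-y_{j_1}\Xi_{i_1})\prod_{k=2}^{m+1}\bigl[(y_{i_k}\Xi_{j_k}-y_{j_k}\Xi_{i_k})-(k-1)(\xi_{i_k}y_{j_k}-\xi_{j_k}y_{i_k})\bigr]\widehat\varphi=0
\]
in $\mathcal S'(T\Sb^{n-1})$. Expanding $\Xi_j$ from its definition in the Fourier picture, the antisymmetric combination $y_i\Xi_j-y_j\Xi_i$ collapses, on the standard degree-zero $\xi$-homogeneous extension of $\widehat\varphi$, to the purely tangential operator $L_{ij}:=y_i\partial_{\xi^j}-y_j\partial_{\xi^i}$: the $y$-derivative contribution is multiplied by the $(i,j)$-symmetric factor $y_iy_j$ and cancels, while $\xi^p\partial_{\xi^p}$ vanishes on degree-zero functions. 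For each fixed $y\neq 0$ we are thus left with a product of $m+1$ first-order tangential operators annihilating $u_y:=\widehat\varphi(y,\cdot)$ on $\Sb^{n-2}_y$.

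The heart of the argument is to show that this product characterizes precisely the restrictions to $\Sb^{n-2}_y$ of degree-$m$ homogeneous polynomials in $\xi$ on $y^\perp$, in the parity class $\pi(m)$. I would argue by induction on $m$. For $m=0$, the single relation $L_{ij}u_y=0$ says that the intrinsic spherical gradient of $u_y$ on $\Sb^{n-2}_y\subset y^\perp$ is parallel to $y$, hence zero; since $n\ge 3$ makes this sphere connected, $u_y$ is constant and we set $\widehat\Phi(y):=u_y$. For the inductive step, the outermost factor $\mathcal J_{i_{m+1}j_{m+1}}-m(\xi_{i_{m+1}}X_{j_{m+1}}-\xi_{j_{m+1}}X_{i_{m+1}})$ applied to $\varphi$ yields a function whose Fourier transform satisfies the length-$m$ version of the system --- the coefficient shifts $c_\ell(\lambda,k)$ of Lemma~\ref{L3.2} are designed exactly to make this reduction consistent --- so by induction it is polynomial of degree $m-1$, and integrating the remaining first-order equation in $\xi$ identifies $u_y$ up to degree $m$. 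An equivalent route, perhaps more robust at the distribution level, is to expand $u_y$ in spherical harmonics on $\Sb^{n-2}_y$ and verify that the composite operator is injective on harmonics of degree $>m$, each factor killing exactly one harmonic degree in $\{0,1,\ldots,m\}$.

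Once $u_y$ is polynomial of degree $m$, the symmetric coefficients $\widehat\Phi_{i_1\ldots i_m}(y)$ are extracted uniquely by imposing $y^p\widehat\Phi_{pi_2\ldots i_m}(y)=0$; this fixes $\widehat\Phi(y)$ because a symmetric $m$-tensor on $y^\perp$ is determined by its polynomial realization on $\Sb^{n-2}_y$ and the orthogonality removes the ambiguity of embedding $y^\perp\hookrightarrow\Rb^n$. Measurability in $y$ is automatic, and the bound $\|\widehat\Phi\|_{L^{2,s}_t(\Rb^n;S^m\Rb^n)}\le C\|\widehat\varphi\|_{L^{2,s}_t(T\Sb^{n-1})}$ follows from Plancherel-type integration over the slice spheres together with the hypothesis $t\in(-(n-1)/2,(n-1)/2)$. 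The main obstacle is the polynomial characterization itself: while Corollary~\ref{C3.1} gives the easy implication for Schwartz $\varphi$, the distributional converse needs the harmonic-decomposition (or reduction-in-degree) analysis above, which must be carried out intrinsically on $T\Sb^{n-1}$ without recourse to an ambient smooth extension $\psi$.
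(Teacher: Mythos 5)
Your overall architecture matches the paper's: Fourier-transform \eqref{2.14}, observe that the $y$-derivative parts of $y_i\Xi_j-y_j\Xi_i$ cancel by antisymmetry so that only $\xi$-differentiations survive, force $\widehat\varphi(y,\cdot)$ to be a degree-$m$ polynomial in $\xi$ on $y^\bot$, and then fix $\widehat\Phi$ uniquely by the tangentiality condition \eqref{4.2}. However, the central step --- the polynomial characterization --- is exactly where your proposal has a genuine gap, and the device you discard is the one that closes it. You extend $\widehat\varphi$ to be homogeneous of degree \emph{zero} in $\xi$; with that choice the first factor $y_i\Xi_j-y_j\Xi_i$ does reduce to $L_{ij}=y_i\partial_{\xi^j}-y_j\partial_{\xi^i}$, but $L_{ij}$ lowers the homogeneity degree by one, so after the first application the Euler term $\xi^p\partial_{\xi^p}$ no longer vanishes and the zeroth-order shifts $(k-1)(\xi_iy_j-\xi_jy_i)$ do \emph{not} disappear from the subsequent factors. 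The paper's Lemma \ref{L4.2} shows that the correct extension is homogeneous of degree $m$ (formula \eqref{4.18}): then at the $k$-th step the intermediate function is homogeneous of degree $m-k+1$, the Euler term contributes exactly $-(m-k+1)$ times the shift, and the explicit coefficient $m-k+1$ in \eqref{4.7} cancels it, leaving the pure product $A_{i_1j_1}\cdots A_{i_{m+1}j_{m+1}}\widehat\psi=0$ with $A_{ij}=y_i\partial_{\xi^j}-y_j\partial_{\xi^i}$. Since the $A_{ij}$ are constant (in $\xi$) commuting vector fields spanning the fiber $y^\bot\setminus\{0\}$ of $V_n\to\Rb^n\setminus\{0\}$, the system becomes ``all pure $(m+1)$-st $\xi$-derivatives vanish'' in adapted coordinates, which immediately yields a degree-$m$ polynomial with distributional coefficients (connectedness of $\Rb^{n-1}\setminus\{0\}$, i.e.\ $n\ge3$, entering here). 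Your substitute arguments do not reach this conclusion: the induction step ends with ``integrating the remaining first-order equation,'' which is precisely the unproven point, and the spherical-harmonics route is inconsistent with the geometry --- the operators $L_{ij}$ are tangent to the punctured hyperplane $y^\bot\setminus\{0\}$, not to the slice sphere $\Sb^{n-2}_y$, and the restrictions of degree-$m$ polynomials to that sphere involve only harmonics of degrees $m,m-2,\dots$, not ``one degree per factor in $\{0,\dots,m\}$.''

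A second, smaller gap: you reduce to ``almost every fixed $y$,'' but \eqref{2.14} is a distributional identity on $T\Sb^{n-1}$ and an $L^{2,s}_t$ element cannot in general be restricted to a single slice; a Fubini-plus-tensor-product-test-function argument would be needed to legitimize slicing, and you would then still have to recover measurable dependence of $\widehat\Phi$ on $y$ from sliecewise data. The paper avoids this entirely by keeping $\widehat\psi$ as a single distribution on the manifold $V_n$, working in local coordinates adapted to the fibration $q(y,\xi)=y$, and obtaining the coefficients $\widehat\Phi_{i_1\dots i_m}$ directly as distributions in $y$; only at the very end is the $L^{2,s}_t$ membership of $\widehat\Phi$ read off from that of $\widehat\varphi$ via \eqref{4.26}. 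Your uniqueness argument via the tangential projection $\delta_i^j-y_iy^j/|y|^2$ is correct and agrees with the paper.
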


Before we begin the proof of this lemma, we define a suitable smooth hypersurface $V_n$ in $({\Rb}^n\setminus\{0\})\times({\Rb}^n\setminus\{0\})$ in which the operators appearing in \eqref{2.14} (in the Fourier variables) simplify.

Let
$$
{\Sb}^{n-1}_0=\{(0,\xi)\mid\xi\in{\Sb}^{n-1}\}\subset T{\Sb}^{n-1}
$$
and
$$
V_n=\{(x,\xi)\in{\Rb}^n\times{\Rb}^n\mid x\neq0,\xi\neq0,\L x,\xi\R=0\}\subset({\Rb}^n\setminus\{0\})\times({\Rb}^n\setminus\{0\}).
$$
$V_n$ is a hypersurface in $({\Rb}^n\setminus\{0\})\times({\Rb}^n\setminus\{0\})$. The manifold $V_n$ is diffeomorphic to $(T{\Sb}^{n-1}\setminus{\Sb}^{n-1}_0)\times(0,\infty)$, the diffeomorphism is defined by
$$
{(T{\Sb}^{n-1}\setminus{\Sb}^{n-1}_0)\times(0,\infty)}\longrightarrow V_n,\quad
(x,\xi;\rho)\mapsto(x,\rho\xi).
$$
On the other hand, $T{\Sb}^{n-1}\setminus{\Sb}^{n-1}_0$ is a hypersurface in $V_n$ and there is the {\it projection}
$$
p:V_n\rightarrow T{\Sb}^{n-1}\setminus{\Sb}^{n-1}_0,\quad p(x,\xi)=(x,\xi/|\xi|).
$$

For the manifold $V_n$, we repeat arguments of \cite[Section 4]{Krishnan:Manna:Sahoo:Sharafutdinov:2019} with small modifications. Namely, we define vector fields $\tilde{X'_i},\tilde{\Xi'_i}\ (1\le i\le n)$ on ${\Rb}^n\times({\Rb}^n\setminus\{0\})=\{(x,\xi)\mid\xi\neq0\}$ by
\begin{equation}
\begin{aligned}
\tilde{X'_i}&=\frac{\partial}{\partial x^i}-\frac{1}{|\xi|^2}\,\xi_i\xi^p\frac{\partial}{\partial x^p}\quad(1\le i\le n),\\
\tilde{\Xi'_i}&=\frac{\partial}{\partial \xi^i}-\frac{1}{|\xi|^2}\,x_i\xi^p\frac{\partial}{\partial x^p}
-\frac{1}{|\xi|^2}\,\xi_i\xi^p\frac{\partial}{\partial \xi^p}\quad(1\le i\le n).
\end{aligned}
                          \label{4.4}
\end{equation}
As compared with formulas (4.3) of \cite{Krishnan:Manna:Sahoo:Sharafutdinov:2019}, there is the additional factor $1/|\xi|^2$ at some terms on right-hand sides of \eqref{4.4}. The factor is added to make $\tilde{X'_i}$ and $\tilde{\Xi'_i}$ homogeneous in $\xi$. At points of $T{\Sb}^{n-1}\setminus{\Sb}^{n-1}_0$, the vector fields $\tilde{X'_i}$ and $\tilde{\Xi'_i}$ coincide with $\tilde X_i$ and $\tilde\Xi_i$ respectively.

At every point $(x,\xi)\in V_n$, vectors ${\tilde X'}_i(x,\xi)$ and ${\tilde \Xi'}_i(x,\xi)\ (1\le i\le n)$ are tangent to $V_n$. This follows from the obvious equalities
$$
\tilde{X'_i}\L x,\xi\R=0,\quad \tilde{\Xi'_i}\L x,\xi\R=0\quad (1\le i\le n)
$$
which hold on $V_n$.
Let $X_i$ and $\Xi_i$ be the restrictions of vector fields ${\tilde X'}_i$ and ${\tilde \Xi'}_i$ to the manifold $V_n$ respectively. Thus, $X_i$ and $\Xi_i$ are smooth vector fields on $V_n$ and can be considered as first order differential operators
$$
X_i,\Xi_i:C^\infty(V_n)\rightarrow C^\infty(V_n).
$$
The restrictions of the vector fields $X_i$ and $\Xi_i$ to the submanifold $T{\Sb}^{n-1}\setminus{\Sb}^{n-1}_0\subset V_n$ coincide with the vector fields on $T{\Sb}^{n-1}\setminus{\Sb}^{n-1}_0$ which were denoted by the same notations $X_i$ and $\Xi_i$ before. The coincidence of notations should not imply any misunderstanding since the new vector fields $X_i$ and $\Xi_i$ are natural extensions of old $X_i$ and $\Xi_i$ from the manifold $T{\Sb}^{n-1}\setminus{\Sb}^{n-1}_0$ to $V_n$.

The main advantage of the manifold $V_n$, as compared with $T{\Sb}^{n-1}\setminus{\Sb}^{n-1}_0$, is the following fact. The vector fields
$\L \xi,\partial_\xi\R=\xi^p\frac{\partial}{\partial\xi^p}$ and
$x_i\frac{\partial}{\partial\xi^j}-x_j\frac{\partial}{\partial\xi^i}\ (1\le i,j\le n)$
are tangent to $V_n$ at every point $(x,\xi)\in V_n$. This follows from the equalities
$$
\L \xi,\partial_\xi\R(\L y,\xi\R)=0,\quad \Big(x_i\frac{\partial}{\partial\xi^j}-x_j\frac{\partial}{\partial\xi^i}\Big)\L y,\xi\R=0
$$
which hold at points $(x,\xi)\in V_n$. Thus, we have the well defined first order differential operators
\begin{equation}
\L \xi,\partial_\xi\R:C^\infty(V_n)\rightarrow C^\infty(V_n),\quad
x_i\frac{\partial}{\partial\xi^j}-x_j\frac{\partial}{\partial\xi^i}:C^\infty(V_n)\rightarrow C^\infty(V_n)\quad(1\le i,j\le n).
                          \label{4.5}
\end{equation}

Let ${\mathcal D}'(V_n)$ be the topological vector space of distributions on the manifold $V_n$.
Since differentiation and multiplication of a distribution by a smooth function are well defined,
$$
\L \xi,\partial_\xi\R:{\mathcal D}'(V_n)\rightarrow {\mathcal D}'(V_n),\quad
y_i\frac{\partial}{\partial\xi^j}-y_j\frac{\partial}{\partial\xi^i}:{\mathcal D}'(V_n)\rightarrow {\mathcal D}'(V_n)\quad(1\le i,j\le n)
$$
are well defined differential operators. We note that the latter formula is written in the variables $(y,\xi)$ that are Fourier dual of $(x,\xi)$.

The operators $x_i\Xi_j-x_j\Xi_i:C^\infty(V_n)\rightarrow C^\infty(V_n)$ can be expressed through operators \eqref{4.5}:
\begin{equation}
x_i\Xi_j-x_j\Xi_i=
x_i\frac{\partial}{\partial\xi^j}-x_j\frac{\partial}{\partial\xi^i}-\frac{1}{|\xi|^2}\,(x_i\xi_j-x_j\xi_i)\L \xi,\partial_\xi\R
\quad(1\le i,j\le n).
                          \label{4.6}
\end{equation}
This easily follows from the definition \eqref{4.4}.

\begin{lemma} \label{L4.2}
Let a function $\varphi\in C^\infty(T{\Sb}^{n-1}\setminus{\Sb}^{n-1}_0)$ satisfy the equations
\begin{equation}
\begin{aligned}
&\Big(x_{i_{m+1}}\Xi_{j_{m+1}}\!-\!x_{j_{m+1}}\Xi_{i_{m+1}}\Big)
\Big(x_{i_m}\Xi_{j_m}\!-\!x_{j_m}\Xi_{i_m}+(x_{i_m}\xi_{j_m}\!-\!x_{j_m}\xi_{i_m})\Big)\\
&\Big(x_{i_{m\!-\!1}}\Xi_{j_{m\!-\!1}}\!-\!x_{j_{m\!-\!1}}\Xi_{i_{m\!-\!1}}\!+\!2(x_{i_{m\!-\!1}}\xi_{j_{m\!-\!1}}
\!-\!x_{j_{m\!-\!1}}\xi_{i_{m\!-\!1}})\Big)\dots\\
&\dots\Big(x_{i_1}\Xi_{j_1}\!-\!x_{j_1}\Xi_{i_1}\!+\!m(x_{i_1}\xi_{j_1}\!-\!x_{j_1}\xi_{i_1})\Big)\varphi=0
\end{aligned}
                          \label{4.7}
\end{equation}
for all indices $1\le i_1,j_1,\dots i_{m+1},j_{m+1}\le n$.
Define the function $\psi\in C^\infty(V_n)$ by
\begin{equation}
\psi(x,\xi)=|\xi|^m\varphi\Big(x,\frac{\xi}{|\xi|}\Big).
                          \label{4.8}
\end{equation}
This function satisfies the equations
\begin{equation}
\Big(x_{i_1}\frac{\partial}{\partial\xi^{j_1}}-x_{j_1}\frac{\partial}{\partial\xi^{i_1}}\Big)\dots
\Big(x_{i_{m+1}}\frac{\partial}{\partial\xi^{j_{m+1}}}-x_{j_{m+1}}\frac{\partial}{\partial\xi^{i_{m+1}}}\Big)\psi=0
                          \label{4.9}
\end{equation}
also for all indices.
\end{lemma}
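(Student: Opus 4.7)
The plan is to interpret \eqref{4.9} on the hypersurface $V_n$ and reduce it, via the identity \eqref{4.6} and the $\xi$-homogeneity of $\psi$, to the intrinsic equation \eqref{4.7} satisfied by $\varphi$ on the smaller submanifold $T{\Sb}^{n-1}\setminus{\Sb}^{n-1}_0$. This mirrors, in the variables $(x,\xi)$ that are Fourier dual of those in Section~3, the argument of Lemma~\ref{L3.2} and Corollary~\ref{C3.1}, where the extrinsic John operators $J_{ij}$ were expressed through the intrinsic ${\mathcal J}_{ij}$ plus lower order corrections using the homogeneity of the extension $\psi$ of \eqref{2.5}.

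\textbf{Main reduction.} Abbreviate, as operators on $C^\infty(V_n)$,
\[
A_{ij}=x_i\Xi_j-x_j\Xi_i,\qquad B_{ij}=x_i\frac{\partial}{\partial\xi^j}-x_j\frac{\partial}{\partial\xi^i},\qquad C_{ij}=x_i\xi_j-x_j\xi_i.
\]
By \eqref{4.6}, $A_{ij}=B_{ij}-(1/|\xi|^2)\,C_{ij}\L\xi,\partial_\xi\R$, so for any $u\in C^\infty(V_n)$ positively homogeneous of degree $\lambda$ in $\xi$, Euler's identity $\L\xi,\partial_\xi\R u=\lambda u$ gives
\[
B_{ij}u=A_{ij}u+\frac{\lambda}{|\xi|^2}\,C_{ij}u.
\]
Each $B_{ij}$ lowers the $\xi$-homogeneity by one, so iterating (and tracking the decreasing degree of the inner function at each step) yields by induction on $k$ the identity
\[
B_{i_1j_1}\dots B_{i_kj_k}\psi\ =\ \prod_{l=1}^{k}\Big(A_{i_lj_l}+\frac{\lambda-k+l}{|\xi|^2}\,C_{i_lj_l}\Big)\psi,
\]
the factors being composed left to right in order of increasing $l$, whenever $\psi\in C^\infty(V_n)$ is positively homogeneous of degree $\lambda$. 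The function $\psi$ of \eqref{4.8} has $\lambda=m$, and for $k=m+1$ the coefficients $\lambda-k+l=l-1$ run through $0,1,\dots,m$, producing
\[
B_{i_1j_1}\dots B_{i_{m+1}j_{m+1}}\psi = A_{i_1j_1}\Big(A_{i_2j_2}+\frac{C_{i_2j_2}}{|\xi|^2}\Big)\dots\Big(A_{i_{m+1}j_{m+1}}+\frac{m\,C_{i_{m+1}j_{m+1}}}{|\xi|^2}\Big)\psi
\]
on $V_n$.

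\textbf{Restriction and homogeneity.} Since $\tilde{\Xi'_i}(|\xi|^2)=0$, each $\Xi_i$, and hence each $A_{ij}$, is tangent to the slice $T{\Sb}^{n-1}\setminus{\Sb}^{n-1}_0\subset V_n$, and multiplication by $C_{ij}$ trivially commutes with restriction. Evaluating the displayed identity on that slice, where $|\xi|=1$ and $\psi|_{T{\Sb}^{n-1}\setminus{\Sb}^{n-1}_0}=\varphi$, gives
\[
\big(B_{i_1j_1}\dots B_{i_{m+1}j_{m+1}}\psi\big)\big|_{T{\Sb}^{n-1}\setminus{\Sb}^{n-1}_0}=A_{i_1j_1}\big(A_{i_2j_2}+C_{i_2j_2}\big)\dots\big(A_{i_{m+1}j_{m+1}}+m\,C_{i_{m+1}j_{m+1}}\big)\varphi.
\]
After the relabeling $(i_l,j_l)\mapsto(i_{m+2-l},j_{m+2-l})$ this right hand side is precisely the left hand side of \eqref{4.7}, and vanishes by hypothesis (which is assumed for all index tuples). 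Consequently $B_{i_1j_1}\dots B_{i_{m+1}j_{m+1}}\psi=0$ on $T{\Sb}^{n-1}\setminus{\Sb}^{n-1}_0$. But this function is positively homogeneous of degree $m-(m+1)=-1$ in $\xi$ on $V_n$, and through the diffeomorphism $V_n\cong(T{\Sb}^{n-1}\setminus{\Sb}^{n-1}_0)\times(0,\infty)$, $(x,\xi;\rho)\mapsto(x,\rho\xi)$, any function homogeneous in $\xi$ that vanishes on the $\rho=1$ slice vanishes throughout $V_n$. This yields \eqref{4.9}.

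\textbf{Main obstacle.} The delicate step is the bookkeeping in the inductive identity: one must track how the coefficient in front of $C_{i_lj_l}/|\xi|^2$ depends on the position $l$ as the $B$'s are peeled off one at a time, exploiting that $B_{ij}$ drops the $\xi$-homogeneity by exactly one at each step. Note also that $B_{ij}$ is tangent to $V_n$ but \emph{not} to $T{\Sb}^{n-1}\setminus{\Sb}^{n-1}_0$, so one cannot interchange restriction with $B_{ij}$; the identity must first be established on all of $V_n$, and the final passage back to a $V_n$-statement relies on the homogeneity argument rather than any direct lift of \eqref{4.7}.
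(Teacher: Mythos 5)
Your proposal is correct and takes essentially the same approach as the paper: your inductive identity converting the composition of the operators $x_i\frac{\partial}{\partial\xi^j}-x_j\frac{\partial}{\partial\xi^i}$ into the product of the operators $x_i\Xi_j-x_j\Xi_i+\frac{c_l}{|\xi|^2}(x_i\xi_j-x_j\xi_i)$ via \eqref{4.6} and Euler's identity is exactly the paper's recursion \eqref{4.10}--\eqref{4.16} read in the opposite direction, with the same coefficients. The concluding step — a function on $V_n$ positively homogeneous of degree $-1$ in $\xi$ that vanishes on the slice $T{\Sb}^{n-1}\setminus{\Sb}^{n-1}_0$ vanishes identically — is likewise the argument the paper applies to $\psi_{m+1}$.
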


\begin{proof}
Being defined by \eqref{4.8}, the function $\psi\in C^\infty(V_n)$ is positively homogeneous of degree $m$ in $\xi$ and satisfies
$\psi|_{T{\Sb}^{n-1}\setminus{\Sb}^{n-1}_0}=\varphi$.

Let us fix indices $i_1,j_1,\dots,i_{m+1},j_{m+1}$ and define the functions $\psi_1,\psi_2,\dots,\psi_{m+1}\in C^\infty(V_n)$ by the recurrent relations
\begin{equation}
\psi_1=\Big(x_{i_1}\Xi_{j_1}-x_{j_1}\Xi_{i_1}+\frac{m}{|\xi|^2}(x_{i_1}\xi_{j_1}-x_{j_1}\xi_{i_1})\Big)\psi,
                          \label{4.10}
\end{equation}
\begin{equation}
\psi_k=\Big(x_{i_k}\Xi_{j_k}-x_{j_k}\Xi_{i_k}+\frac{m-k+1}{|\xi|^2}(x_{i_k}\xi_{j_k}-x_{j_k}\xi_{i_k})\Big)\psi_{k-1}\quad(k=2,\dots,m+1),
                          \label{4.11}
\end{equation}
where $\Xi_i$ are now understood as differential operators on the manifold $V_n$. Observe that every $\psi_k(x,\xi)$ is positively homogeneous of degree
$m-k$ in $\xi$. Combining \eqref{4.10}--\eqref{4.11}, we see that
$$
\begin{aligned}
\psi_{m+1}=&\Big(x_{i_{m+1}}\Xi_{j_{m+1}}\!-\!x_{j_{m+1}}\Xi_{i_{m+1}}\Big)
\Big(x_{i_m}\Xi_{j_m}\!-\!x_{j_m}\Xi_{i_m}+\frac{1}{|\xi|^2}(x_{i_m}\xi_{j_m}\!-\!x_{j_m}\xi_{i_m})\Big)\\
&\Big(x_{i_{m\!-\!1}}\Xi_{j_{m\!-\!1}}\!-\!x_{j_{m\!-\!1}}\Xi_{i_{m\!-\!1}}\!+\!\frac{2}{|\xi|^2}(x_{i_{m\!-\!1}}\xi_{j_{m\!-\!1}}
\!-\!x_{j_{m\!-\!1}}\xi_{i_{m\!-\!1}})\Big)\!\dots\\
&\dots\Big(x_{i_1}\Xi_{j_1}\!-\!x_{j_1}\Xi_{i_1}\!+\!\frac{m}{|\xi|^2}(x_{i_1}\xi_{j_1}\!-\!x_{j_1}\xi_{i_1})\Big)\psi.
\end{aligned}
$$
Equation \eqref{4.7} is now written as
\begin{equation}
\psi_{m+1}|_{T{\Sb}^{n-1}\setminus{\Sb}^{n-1}_0}=0.
                          \label{4.12}
\end{equation}
Since the function $\psi_{m+1}\in C^\infty(V_n)$ is positively homogeneous of degree $-1$ in $\xi$ and satisfies \eqref{4.12}, it must be identically equal to zero, i.e.,
\begin{equation}
\psi_{m+1}=0.
                          \label{4.13}
\end{equation}

Let us simplify the formula \eqref{4.10}. By \eqref{4.6},
$$
x_{i_1}\Xi_{j_1}-x_{j_1}\Xi_{i_1}=
x_{i_1}\frac{\partial}{\partial\xi^{j_1}}-x_{j_1}\frac{\partial}{\partial\xi^{i_1}}-
\frac{1}{|\xi|^2}\,(x_{i_1}\xi_{j_1}-x_{j_1}\xi_{i_1})\L \xi,\partial_\xi\R.
$$
Substituting this expression into \eqref{4.10}, we obtain
\begin{equation}
\psi_1=\Big(x_{i_1}\frac{\partial}{\partial\xi^{j_1}}-x_{j_1}\frac{\partial}{\partial\xi^{i_1}}
+\frac{1}{|\xi|^2}(x_{i_1}\xi_{j_1}-x_{j_1}\xi_{i_1})(m-\L \xi,\partial_\xi\R)\Big)\psi.
                          \label{4.14}
\end{equation}
Since the function $\psi(x,\xi)$ is a positively homogeneous of degree $m$ in $\xi$, it satisfies
$$
(m-\L \xi,\partial_\xi\R)\psi=0.
$$
Formula \eqref{4.14} is simplified to the following one:
\begin{equation}
\psi_1=\Big(x_{i_1}\frac{\partial}{\partial\xi^{j_1}}-x_{j_1}\frac{\partial}{\partial\xi^{i_1}}\Big)\psi.
                          \label{4.15}
\end{equation}

In the same way we simplify the formula \eqref{4.11} to the following one:
\begin{equation}
\psi_k=\Big(x_{i_k}\frac{\partial}{\partial\xi^{j_k}}-x_{j_k}\frac{\partial}{\partial\xi^{i_k}}\Big)\psi_{k-1}\quad(k=2,\dots,m+1).
                          \label{4.16}
\end{equation}
Combining \eqref{4.15}--\eqref{4.16}, we see that
$$
\psi_{m+1}=\Big(x_{i_{m+1}}\frac{\partial}{\partial\xi^{j_{m+1}}}-x_{j_{m+1}}\frac{\partial}{\partial\xi^{i_{m+1}}}\Big)\dots
\Big(x_{i_1}\frac{\partial}{\partial\xi^{j_1}}-x_{j_1}\frac{\partial}{\partial\xi^{i_1}}\Big)\psi.
$$
Therefore the equation \eqref{4.13} is now written as
\begin{equation}
\Big(x_{i_{m+1}}\frac{\partial}{\partial\xi^{j_{m+1}}}-x_{j_{m+1}}\frac{\partial}{\partial\xi^{i_{m+1}}}\Big)\dots
\Big(x_{i_1}\frac{\partial}{\partial\xi^{j_1}}-x_{j_1}\frac{\partial}{\partial\xi^{i_1}}\Big)\psi=0.
                          \label{4.17}
\end{equation}
This equation holds for any indices. Observe that the order of factors on the left-hand side of \eqref{4.17} does not matter since the operators
$x_i\frac{\partial}{\partial\xi^j}-x_j\frac{\partial}{\partial\xi^i}$ and $x_k\frac{\partial}{\partial\xi^l}-x_l\frac{\partial}{\partial\xi^k}$ commute.
We have thus proved \eqref{4.9}.
\end{proof}
\begin{proof}[Proof of Lemma \ref{L4.1}]
(Compare with the proof of \cite[Lemma 4.10]{Sh4}.)
We restrict the function $\widehat\varphi\in L^{2,s}_{t,\pi(m)}(T{\Sb}^{n-1})$ to the open set $T{\Sb}^{n-1}\setminus{\Sb}^{n-1}_0$ and consider the restriction as a distribution, i.e., $\widehat\varphi\in{\mathcal D}'(T{\Sb}^{n-1}\setminus{\Sb}^{n-1}_0)$, see Proposition \ref{P2.1}.

Given $\widehat\varphi\in{\mathcal D}'(T{\Sb}^{n-1}\setminus{\Sb}^{n-1}_0)$,  similar to what was done in Lemma \ref{L4.2}, we define $\widehat\psi\in{\mathcal D}'(V_n)$ by
\begin{equation}
\widehat\psi(y,\xi)=|\xi|^{m}\widehat\varphi\Big(y,\frac{\xi}{|\xi|}\Big).
                                        \label{4.18}
\end{equation}
The distribution $\widehat\psi(y,\xi)$ is positively homogeneous of degree $m$ in $\xi$. Hereafter we use some basic facts of theory of homogeneous distributions. Theory of homogeneous distributions on ${\Rb}^n$ is presented in \cite[Section 3.2]{He}. In our case, $\widehat\psi(y,\xi)$ is a homogeneous distribution of $\xi$ depending on the additional argument $y$. We use only facts of the theory which can be generalized to distributions with a parameter.

By Lemma \ref{L4.2}, the equations \eqref{2.14} are written in terms of $\widehat\psi$ as follows:
\begin{equation}
\Big(y_{i_1}\frac{\partial}{\partial \xi^{j_1}}-y_{j_1}\frac{\partial}{\partial \xi^{i_1}}\Big)\dots
\Big(y_{i_{m+1}}\frac{\partial}{\partial \xi^{j_{m+1}}}-y_{j_{m+1}}\frac{\partial}{\partial \xi^{i_{m+1}}}
\Big)\widehat\psi=0\quad(1\le i_1,j_1,\cdots, i_{m+1},j_{m+1}\le n).
                          \label{4.19}
\end{equation}
Lemma \ref{L4.2} is formulated for smooth functions. Nevertheless, it is true for distributions as well. The proof is the same with minor modifications.

The {\it projection}
\begin{equation}
q:V_n\to{\Rb}^n\setminus\{0\},\quad q(y,\xi)=y
                          \label{4.20}
\end{equation}
is a smooth fiber bundle with fibers $q^{-1}(y)=y^\bot\setminus\{0\}$ diffeomorphic to ${\Rb}^{n-1}\setminus\{0\}$. It determines the linear continuous operator
$$
q^*:{\mathcal D}'({\Rb}^n\setminus\{0\})\to{\mathcal D}'(V_n),\quad q^*f=f\circ q.
$$
See \cite[Section 6.1]{He} for the definition of the composition of a distribution with a smooth map.

Let us consider first order differential operators
$$
A_{ij}=y_i\frac{\partial}{\partial \xi^j}-y_j\frac{\partial}{\partial \xi^i}:
C^\infty(V_n)\to C^\infty(V_n)\quad (1\le i<j\le n)
$$
as vector fields on the manifold $V_n$. These vector fields are tangent to fibers of the bundle \eqref{4.20}. At every point
$(y,\xi)\in V_n$,
the linear hull of the vectors $A_{ij}(y,\xi)$ coincided with the tangent space $T_{(y,\xi)}\big(q^{-1}(y)\big)$ of the fiber $q^{-1}(y)$.

For an arbitrary point
$(y_0,\xi_0)\in V_n$,
we can choose a neighborhood
$U\subset V_n$ of $(y_0,\xi_0)$
and local coordinates
$(y^1,\dots,y^{n-1},\xi^1,\dots,\xi^{n-1})$
defined in
$U$
so that the restriction of the projection \eqref{4.20} to
$U$
is defined in these coordinates by
$q(y^1,\dots,y^{n-1},\xi^1,\dots,\xi^{n-1})=(y^1,\dots,y^{n-1})$.
Moreover, we can assume without lost of generality that the coordinate map is a diffeomorphism of
$U$
onto
${\Rb}^{n-1}\times ({\Rb}^{n-1}\setminus\{0\})$.
Equations \eqref{4.19} are written in these coordinates as
\begin{equation}
\frac{\partial^{m+1}\widehat\psi}{\partial \xi^{\alpha_1}\dots\partial \xi^{\alpha_{m+1}}}=0\quad(1\le \alpha_1,\dots,\alpha_{m+1}\le n-1).
                                         \label{4.21}
\end{equation}
The summation over repeated Greek indices is performed from 1 to $n-1$.
Let us use the following easy statement 

{\it If a distribution
$\widehat\psi\in{\mathcal D}'\big({\Rb}^{n-1}\times ({\Rb}^{n-1}\setminus\{0\})\big)\ (n\ge3)$
on
${\Rb}^{n-1}\times ({\Rb}^{n-1}\setminus\{0\})=\{(y^1,\dots,y^{n-1},\xi^1,\dots,\xi^{n-1})\}$
satisfies equations \eqref{4.21}, then it is a homogeneous polynomial of degree $m$ in $\xi$,
i.e., there exist distributions
$\widehat\Phi_{\alpha_1\dots\alpha_m} \in{\mathcal D}'({\Rb}^{n-1})$
such that
\begin{equation}
\widehat\psi=(\tilde q\,{}^*\widehat\Phi_{\alpha_1\dots\alpha_m})\xi^{\alpha_1}\dots\xi^{\alpha_m},
                                         \label{4.22}
\end{equation}
where
$\tilde q:{\Rb}^{n-1}\times ({\Rb}^{n-1}\setminus\{0\})\rightarrow{\Rb}^{n-1}$ is defined by $\tilde q(y,\xi)=y$}.

Applying this statement, we obtain the representation \eqref{4.22} for $\widehat\psi$ in some local coordinates. Returning to Cartesian coordinates in ${\Rb}^n$, we write the representation in the form
\begin{equation}
\widehat\psi= (q^*\widehat\Phi_{i_1\dots i_m})\xi^{i_1}\dots\xi^{i_m}
                                         \label{4.23}
\end{equation}
with $\widehat\Phi_{i_1\dots i_m}\in{\mathcal D}'({\Rb}^n\setminus\{0\})$. The summation over repeated Latin indices in \eqref{4.23} is performed from 1 to $n$. We will also write \eqref{4.23} in the traditional form
\begin{equation}
\widehat\psi(y,\xi)= \widehat\Phi_{i_1\dots i_m}(y)\xi^{i_1}\dots\xi^{i_m}\quad(y\in{\Rb}^n\setminus\{0\},\xi\in y^\bot\setminus\{0\})
                                         \label{4.24}
\end{equation}
although it is understood in the distribution sense. Of course we can assume $\widehat\Phi_{i_1\dots i_m}$ to be symmetric in all indices, i.e., $\widehat\Phi=(\widehat\Phi_{i_1\dots i_m})\in{\mathcal D}'({\Rb}^n\setminus\{0\};S^m{\Rb}^n)$.

The tensor field $\widehat\Phi$ is not uniquely determined by \eqref{4.24} since the equation \eqref{4.24} holds for $\xi\in y^\bot$ only (in the distribution sense: the equation \eqref{4.23} holds on $V_n$). Nevertheless, we can state: there exists a unique tensor field
$\widehat\Phi=(\widehat\Phi_{i_1\dots i_m})\in{\mathcal D}'({\Rb}^n\setminus\{0\};S^m{\Rb}^n)$ satisfying \eqref{4.24} and \eqref{4.2}, where \eqref{4.2} is also understood in the distribution sense. Indeed, let  $\widehat{\Phi'}\in{\mathcal D}'({\Rb}^n\setminus\{0\};S^m{\Rb}^n)$ be an arbitrary tensor field satisfying \eqref{4.24}. Define
$\widehat\Phi=(\widehat\Phi_{i_1\dots i_m})\in{\mathcal D}'({\Rb}^n\setminus\{0\};S^m{\Rb}^n)$ by
$$
\widehat\Phi_{i_1\dots i_m}(y)=\Big(\delta_{i_1}^{j_1}-\frac{y_{i_1}y^{j_1}}{|y|^2}\Big)\dots
\Big(\delta_{i_m}^{j_m}-\frac{y_{i_m}y^{j_m}}{|y|^2}\Big)\widehat{\Phi'}_{j_1\dots j_m}(y)\quad(0\neq y\in{\Rb}^n),
$$
where $\delta_i^j$ is the Kronecker tensor. The latter formula can be also understood in the distribution sense. The tensor field $\widehat\Phi$ satisfies \eqref{4.24} and \eqref{4.2} and is uniquely determined by these relations. In terms of \cite[Lemma 2.6.1]{Sharafutdinov:Book},
$\widehat\Phi$ is the {\it tangential component} of $\widehat{\Phi'}$.

The projection \eqref{4.20} is the composition $q=\pi\circ p$, where the smooth maps
$$
V_n\ {\stackrel p\longrightarrow}\ T{\Sb}^{n-1}\setminus{\Sb}^{n-1}_0\ {\stackrel \pi\longrightarrow}\ {\Rb}^n\setminus\{0\}
$$
are defined by $p(y,\xi)=(y,\xi/|\xi|)$ and $\pi(y,\xi)=y$. Actually $p$ and $\pi$ are also smooth fiber bundles. The formula \eqref{4.23} can be written as
$$
\widehat\psi= |\xi|^m(p^*\pi^*\widehat\Phi_{i_1\dots i_m})\frac{\xi^{i_1}}{|\xi|}\dots\frac{\xi^{i_m}}{|\xi|}.
$$
Since $\frac{\xi^i}{|\xi|}=\xi^i\circ p$, the formula can be also written as
\begin{equation}
\widehat\psi= |\xi|^m p^*\big((\pi^*\widehat\Phi_{i_1\dots i_m})\xi^{i_1}\dots\xi^{i_m}\big).
                                         \label{4.25}
\end{equation}
By the definition \eqref{4.18},
$$
\widehat\psi= |\xi|^m p^*\widehat\varphi.
$$
This gives together with \eqref{4.25}
$$
p^*\widehat\varphi=p^*\big((\pi^*\widehat\Phi_{i_1\dots i_m})\xi^{i_1}\dots\xi^{i_m}\big).
$$
Since $p^*$ is an injective operator, this implies
$$
\widehat\varphi=(\pi^*\widehat\Phi_{i_1\dots i_m})\xi^{i_1}\dots\xi^{i_m}
$$
or in the traditional form
\begin{equation}
\widehat\varphi(y,\xi)=\widehat\Phi_{i_1\dots i_m}(y)\xi^{i_1}\dots\xi^{i_m}\quad\big((y,\xi)\in T{\Sb}^{n-1}\setminus{\Sb}^{n-1}_0\big).
                                         \label{4.26}
\end{equation}

We have thus proved that the statement \eqref{4.3} of Lemma \ref{L4.1} holds on $T{\Sb}^{n-1}\setminus{\Sb}^{n-1}_0$. By the hypothesis of the lemma, $\widehat\varphi\in L^{2,s}_{t,\pi(m)}(T{\Sb}^{n-1})$. Together with \eqref{4.26}, this easily implies that
$\widehat\Phi\in L^{2,s}_t({\Rb}^n\setminus\{0\};S^m{\Rb}^n)$ and the formula \eqref{4.3} holds in the conventional sense at almost all points $(y,\xi)\in T{\Sb}^{n-1}$.
\end{proof}

\begin{proof}[Proof of Lemma \ref{L3.3}]
By the definition of the $H^{s+1/2}_{t+1/2}(T{\Sb}^{n-1})$-norm, the equation \eqref{3.14} is written as follows:
\begin{equation}
\int\limits_{{\Sb}^{n-1}}\int\limits_{\xi^\bot}|y|^{2t+1}(1+|y|^2)^{s-t}\widehat\varphi(y,\xi)\overline{\widehat{If}(y,\xi)}\,dyd\xi=0
\quad\big(f\in{\mathcal S}_{sol}({\Rb}^n;S^m{\Rb}^n)\big).
                                        \label{4.27}
\end{equation}

Applying Lemma \ref{L4.1} to the function $\widehat\varphi\in L^{2,s+1/2}_{t+1/2.\pi(m)}(T{\Sb}^{n-1})$, we can state the existence of a tensor field $\widehat\Phi\in  L^{2,s+1/2}_{t+1/2}({\Rb}^n;S^m{\Rb}^n)$ satisfying \eqref{4.2} and \eqref{4.3}. With the help of \eqref{4.3}, the equation \eqref{4.27} becomes
\begin{equation}
\int\limits_{{\Sb}^{n-1}}\int\limits_{\xi^\bot}|y|^{2t+1}(1+|y|^2)^{s-t}
{\widehat\Phi}{}^{i_1\dots i_m}(y)\overline{\widehat{If}(y,\xi)}\,\xi_{i_1}\dots\xi_{i_m}\,dyd\xi=0.
                                        \label{4.28}
\end{equation}

By \eqref{2.3},
$$
\widehat{If}(y,\xi)=(2\pi)^{1/2}\widehat f_{i_1\dots i_m}(y)\xi^{i_1}\dots\xi^{i_m}\quad\big((y,\xi)\in T{\Sb}^{n-1}\big).
$$
Substituting this expression into \eqref{4.28}, we obtain
$$
\int\limits_{{\Sb}^{n-1}}\int\limits_{\xi^\bot}|y|^{2t+1}(1+|y|^2)^{s-t}
{\widehat\Phi}{}^{i_1\dots i_m}(y)\overline{\widehat f^{j_1\dots j_m}(y)}\,\xi_{i_1}\dots\xi_{i_m}\xi_{j_1}\dots\xi_{j_m}\,dyd\xi=0.
$$
Changing the order of integrations with the help of \cite[Lemma 2.15.3]{Sharafutdinov:Book}, we write this in the form
\begin{equation}
\int\limits_{{\Rb}^n}|y|^{2t}(1+|y|^2)^{s-t}{\widehat\Phi}{}^{i_1\dots i_m}(y)\overline{\widehat f^{j_1\dots j_m}(y)}
\bigg[\int\limits_{{\Sb}^{n-1}\cap\xi^\bot}\xi_{i_1}\dots\xi_{i_m}\xi_{j_1}\dots\xi_{j_m}\,d^{n-2}\xi\bigg]\,dy=0.
                                        \label{4.29}
\end{equation}
By \cite[Lemma 2.15.4]{Sharafutdinov:Book},
$$
\int\limits_{{\Sb}^{n-1}\cap\xi^\bot}\xi_{i_1}\dots\xi_{i_m}\xi_{j_1}\dots\xi_{j_m}\,d^{n-2}\xi
=c_{m,n}\varepsilon^m_{i_1\dots i_mj_1\dots j_m}(y)
$$
with some positive constant $c_{m,n}$, where
\begin{equation}
\varepsilon_{ij}(y)=\delta_{ij}-y_iy_j/|y|^2
                                        \label{4.30}
\end{equation}
and $\varepsilon^m$ is the $m^{\mathrm{th}}$ symmetric power of $\varepsilon$. The formula \eqref{4.29} becomes
\begin{equation}
\int\limits_{{\Rb}^n}|y|^{2t}(1+|y|^2)^{s-t}\varepsilon^m_{i_1\dots i_mj_1\dots j_m}(y)
{\widehat\Phi}{}^{i_1\dots i_m}(y)\overline{\widehat f^{j_1\dots j_m}(y)}\,dy=0.
                                        \label{4.31}
\end{equation}
This equality holds for every $f\in {\mathcal S}_{\mathrm{sol}}({\Rb}^n;S^m{\Rb}^n)$.

We write the equation \eqref{4.31} in the form
\begin{equation}
\int\limits_{{\Rb}^n}\varepsilon^m_{i_1\dots i_mj_1\dots j_m}(y)
\Big(|y|^{t+1/2}(1+|y|^2)^{(s-t)/2}{\widehat\Phi}{}^{i_1\dots i_m}(y)\Big)
\Big(|y|^{t-1/2}(1+|y|^2)^{(s-t)/2}\overline{\widehat f^{j_1\dots j_m}(y)}\Big)dy=0.
                                        \label{4.32}
\end{equation}
Since the tensor field $\widehat\Phi$ belongs to  $L^{2,s+1/2}_{t+1/2}({\Rb}^n;S^m{\Rb}^n)$ and satisfies \eqref{4.2}, the tensor field
$\tilde\Phi=|y|^{t+1/2}(1+|y|^2)^{(s-t)/2}{\widehat\Phi}$ belongs to $L^2({\Rb}^n;S^m{\Rb}^n)$ and satisfies
\begin{equation}
y_p\tilde\Phi{}^{pj_2\dots j_m}(y)=0.
                                        \label{4.33}
\end{equation}
Recall that $2t-1>-n$ by the hypothesis $t\in\big(-(n-1)/2,(n-2)/2\big)$ of Lemma \ref{L3.3}. Since $f\in{\mathcal S}({\Rb}^n;S^m{\Rb}^n)$ is a solenoidal tensor field, the tensor field $g=|y|^{t-1/2}(1+|y|^2)^{(s-t)/2}\widehat f$ belongs to $L^2({\Rb}^n;S^m{\Rb}^n)$ and satisfies
\begin{equation}
y_pg^{pj_2\dots j_m}(y)=0.
                                        \label{4.34}
\end{equation}
The equation \eqref{4.32} is written in terms of $\tilde\Phi$ and $g$ as follows:
\begin{equation}
\int\limits_{{\Rb}^n}\varepsilon^m_{i_1\dots i_mj_1\dots j_m}(y)
{\tilde\Phi}{}^{i_1\dots i_m}(y)\overline{g^{j_1\dots j_m}(y)}\,dy=0.
                                        \label{4.35}
\end{equation}
In terms of \cite[Section 4]{HORF_Work}, \eqref{4.33} and \eqref{4.34} mean that $\tilde\Phi$ and $g$ are {\it tangential} tensor fields.
Let $L^2_\top({\Rb}^n;S^m{\Rb}^n)$ be the space of all such tensor fields.

Since the tensor fields $\tilde\Phi$ and $g$ belong to $L^2_\top({\Rb}^n;S^m{\Rb}^n)$,
the second summand on the right-hand side of \eqref{4.30} gives no contribution to the integral \eqref{4.35}, i.e. $\varepsilon$ can be replaced with the Kronecker tensor $\delta$ in \eqref{4.35}. We thus arrive at the equation
\begin{equation}
\int\limits_{{\Rb}^n}\delta^m_{i_1\dots i_mj_1\dots j_m}(y)
{\tilde\Phi}{}^{i_1\dots i_m}(y)\overline{g^{j_1\dots j_m}(y)}\,dy=0.
                                        \label{4.36}
\end{equation}

Now, we apply some arguments of \cite[Section 2.15]{Sharafutdinov:Book} to equation \eqref{4.36}. Indeed, the integrand in \eqref{4.35} is of the same structure as that in \cite[formula 2.15.11]{Sharafutdinov:Book}. The latter formula was transformed in \cite[Section 2.15]{Sharafutdinov:Book} to a form similar to \eqref{4.36}. We need only the statement following from further arguments presented in \cite[Section 2.15]{Sharafutdinov:Book}: the integral \eqref{4.36} defines a positive scalar product on the space $L^2_\top({\Rb}^n;S^m{\Rb}^n)$.

Being valid for every $g\in L^2_\top({\Rb}^n;S^m{\Rb}^n)$, the equation \eqref{4.36} implies $\tilde\Phi=0$. Together with the equality
$\tilde\Phi=|y|^{t+1/2}(1+|y|^2)^{(s-t)/2}{\widehat\Phi}$, this gives ${\widehat\Phi}=0$. With the help of \eqref{4.26}, the latter equality implies $\widehat\varphi=0$ and $\varphi=0$.
\end{proof}

{\bf Remark.} The hypothesis $n\ge3$ of Lemma \ref{L3.3} was used in our arguments as follows. It is important that ${\Rb}^{n-1}\setminus\{0\}$ is a connected space for the validity of \eqref{4.22}. In the case of $n=2$, the right-hand side of \eqref{4.22} can be represented by two different polynomials on two connected components of ${\Rb}^1\setminus\{0\}$.
\section*{Acknowledgments}
The first author would like to thank the Isaac Newton Institute for Mathematical Sciences, Cambridge, UK, for support and hospitality during the semester programme, \emph{Rich and Nonlinear Tomography - a multidisciplinary approach} in 2023, supported by EPSRC Grant Number EP/R014604/1. The work of the second author was performed according to the Russian Government research assignment for IM~SB~RAS, project FWNF-2022-0006.
\bibliographystyle{alpha}

\begin{thebibliography}{10}

\bibitem{De}
Alexander Denisiuk.
\newblock On range conditions of the tensor x-ray transform in ${\mathbb R}^n$.
\newblock {\em Inverse Probl. Imaging}, 14(3), 423--435.

    \bibitem{Helgason_Acta_Paper}
    Sigurdur Helgason.
    \newblock The {R}adon transform on {E}uclidean spaces, compact two-point
    homogeneous spaces and {G}rassmann manifolds.
    \newblock {\em Acta Math.}, 113:153--180, 1965.

    \bibitem{Helgason:Book}
    Sigurdur Helgason.
    \newblock {\em The {R}adon transform}, volume~5 of {\em Progress in
        Mathematics}.
    \newblock Birkh\"{a}user Boston, Inc., Boston, MA, second edition, 1999.

    \bibitem{He}
    Lars H\"{o}rmander.
    \newblock {\em The analysis of linear partial differential operators. {I}}.
    \newblock Classics in Mathematics. Springer-Verlag, Berlin, 2003.
    \newblock Distribution theory and Fourier analysis, Reprint of the second
    (1990) edition [Springer, Berlin; MR1065993 (91m:35001a)].

\bibitem{J}
Fritz John.
\newblock The ultrahyperbolic differential equation with four independent variables.
\newblock {\em Duke Math. J.}, 4 (1938), no. 2, 300--322.

    \bibitem{Krishnan:Manna:Sahoo:Sharafutdinov:2019}
    Venkateswaran~P. Krishnan, Ramesh Manna, Suman~Kumar Sahoo, and Vladimir~A.
    Sharafutdinov.
    \newblock Momentum ray transforms.
    \newblock {\em Inverse Probl. Imaging}, 13(3):679--701, 2019.

    \bibitem{Krishnan:Manna:Sahoo:Sharafutdinov:2020}
    Venkateswaran~P. Krishnan, Ramesh Manna, Suman~Kumar Sahoo, and Vladimir~A.
    Sharafutdinov.
    \newblock Momentum ray transforms, II: range characterization in the Schwartz space.
    \newblock {\em Inverse Problems}, 36 (2020) 045009 (33pp).

    \bibitem{HORF_Work}
    Venkateswaran~P. Krishnan and Vladimir~A. Sharafutdinov.
    \newblock Ray transform on {S}obolev spaces of symmetric tensor fields, {I}:
    higher order {R}eshetnyak formulas.
    \newblock {\em Inverse Probl. Imaging}, 16(4):787--826, 2022.

\bibitem{NSV}
N. Nadirashvili, V. Sharafutdinov, S. Vladuts.
 \newblock The John equation in tensor tomography in three dimensions.
\newblock {\em Inverse Problems}, 32 (2016), 105013, 15 pp,

    \bibitem{Natterer:Book}
    F.~Natterer.
    \newblock {\em The mathematics of computerized tomography}.
    \newblock B. G. Teubner, Stuttgart; John Wiley \& Sons, Ltd., Chichester, 1986.

\bibitem{P}
E. Yu. Pantjukhina.
 \newblock Description of the range of X-ray transform in two-dimensional case.
 \newblock In: {\em Methods of Solutions of Inverse Problems.}
Novosibirsk (1990), 80--89 (in Russian).

    \bibitem{Sharafutdinov:Book}
    V.~A. Sharafutdinov.
    \newblock {\em Integral geometry of tensor fields}.
    \newblock Inverse and Ill-posed Problems Series. VSP, Utrecht, 1994.

    \bibitem{Sh5}
    V.~A. Sharafutdinov.
    \newblock Radon transform on {S}obolev spaces.
    \newblock {\em Sib. Math. J.}, 62(3):560--580, 2021.
    \newblock Translation of Sibirsk. Mat. Zh. {{\bf{6}}2} (2021), no. 3, 690--714.

    \bibitem{Sh3}
    Vladimir~A. Sharafutdinov.
    \newblock The {R}eshetnyak formula and {N}atterer stability estimates in tensor
    tomography.
    \newblock {\em Inverse Problems}, 33(2):025002, 20, 2017.

    \bibitem{Sh4}
    Vladimir~A. Sharafutdinov.
    \newblock X-ray transform on {S}obolev spaces.
    \newblock {\em Inverse Problems}, 37(1): Paper No. 015007, 25, 2021.

\end{thebibliography}

\end{document}